\numberwithin{equation}{section}
\theoremstyle{plain}
\newcommand{\A}{\ensuremath{{\mathbb{A}}}}
\newcommand{\cc}{\mathfrak{c}}
\newcommand{\Z}{\ensuremath{{\mathbb{Z}}}}
\newcommand{\R}{\ensuremath{{\mathbb{R}}}}
\newcommand{\F}{\ensuremath{{\mathbb{F}}}}
\newcommand{\vv}{\ensuremath{v_{\mathfrak{A}}}}
\newcommand{\T}{\ensuremath{{\mathbb{T}}}}
\renewcommand{\L}{\ensuremath{{\mathbb{L}}}}
\newcommand{\Vol}{\text{Vol}}
\newcommand{\GL}{\ensuremath{{\text{GL}}}}
\newcommand{\PGL}{\ensuremath{{\text{PGL}}}}
\newcommand{\Char}{\ensuremath{{\text{char}}}}
\newcommand{\Image}{\ensuremath{{\text{Image}}}}
\newcommand{\diag}{\ensuremath{{\text{diag}}}}
\newtheorem{theo}{Theorem}[section]
\newtheorem{lem}[theo]{Lemma}
\newtheorem{cor}[theo]{Corollary}
\theoremstyle{remark}
\newtheorem{rem}[theo]{Remark}
\theoremstyle{definition}
\newtheorem{defn}[theo]{Definition}
\newtheorem*{cor*}{Corollary}
\newcommand{\Hom}{\operatorname{Hom}}
\newcommand{\Ind}{\operatorname{Ind}}
\newcommand{\Tr}{\text{Tr}}
\newcommand{\Nm}{\text{Nm}}
\begin{document}
\bibliographystyle{plain}
\title{Sup norm on $\PGL_n$ in depth aspect}
\author{Yueke Hu}
\address{Department of Mathematics\\
  ETH Zurich\\
  Zurich\\
  Switzerland}
\email{huyueke2012@gmail.com}

 \begin{abstract}
In this short paper we give the sub-local upper bound for the sup norm of an automorphic form on $\PGL_n$, whose associated automorphic representation has finite conductor $C(\pi)=p^c$ with $c\rightarrow \infty$, and its local component at the place of ramification is a minimal vector belonging to an irreducible representation with generic induction datum.
 \end{abstract}

\maketitle
\tableofcontents

\section{Introduction}
\subsection{Sup norm problem}
The study of sup norm of an automorphic form restricted to a compact domain originated from the work of \cite{iwaniec_$l^infty$_1995}, and has since be extended to various aspects and groups.
In this short paper we study the sup norm problem of automorphic forms on $\PGL_n$ in the depth aspect. We shall give a generalisation of our previous work in depth aspect  for $\GL_2$ in \cite{HuNelsonSaha:17a}, which at the same time is an $p-adic$ analogue of \cite{Marshall:} and \cite{BlomerMaga:} in the archimedean aspect, in a simple and explicit fashion. Instead of the standard setting in which people usually choose newforms as the local component at ramified places, we choose the so-called minimal vectors, whose special properties allow us to greatly simplify the computations and estimates. We would like to use this result to advocate the idea that minimal vectors are better or more natural choices than classical newforms for studying certain algebraic and analytic problems in depth aspect.

More specifically, let $\pi$ be an automorphic representation on $\PGL_n$ over any number field with bounded archimedean parameter and finite conductor $p^c$ with ramification at a single place $v$ and $c\rightarrow \infty$. Furthermore we assume that $\pi_v$ has generic induction datum. See Definition \ref{Defn:ParabolicGeneric} for the exact meaning for such representations. 
For supercuspidal representations, this implies that the associated datum
 $\beta$ is 'minimal' (which is not related to 'minimal' vector) as in Definition \ref{Defn:minimalBeta}. For parabolically induced representation, this implies that the all supercuspidal representations $\sigma_j$ used in the construction have generic induction datum,  are not related, and have similar normalised depth $c(\sigma_j)$ (see Definition \ref{Def:Depthnomalised}).
These are the p-adic analogues of the principal series representations at archimedean places whose Satake parameters stay away from the walls of Weyl chambers. 
Let $F\in \pi$ be an automorphic form whose local components at  unramified places are  spherical, and at $v$  is a minimal vector as in Definition \ref{Defn:minimalForParabolic}.

Under such assumptions, our main result is the following. %upper bound for sup norm in terms of the finite conductor.

\begin{theo}For an automorphic form $F$ as above and $g$ in a fixed compact domain $\Omega$ of $\PGL_n(\A)$, we have
\begin{equation}
\sup|F(g)|\ll_{\Omega, n, p} C(\pi)^{\frac{n-1}{4}-\frac{1}{8n^3}},
\end{equation}
where $C(\pi)=p^c$ is the finite conductor of $\pi$.
\end{theo}

The basic strategy is to use amplified pre-trace formula following the original paper \cite{iwaniec_$l^infty$_1995}, which was also used in most subsequent papers on sup norm problem and we will directly borrow it. In particular we borrow the amplifier from \cite{BlomerMaga:}, which is the only reason we work with $\PGL_n$ instead of $\GL_n$, and can be easily extended to $\GL_n$ with additional pages. The main innovation of this paper lies in the idea to apply minimal vectors into this mechanism for more general groups.

\subsection{Minimal vector}
This result is a direct generalisation of \cite{HuNelsonSaha:17a}, where we used minimal vectors for a special family of supercuspidal representations on $\GL_2$ to obtain nice upper and lower bounds for the sup norm of automorphic forms. Such test vectors arise naturally from compact induction theory in general, but were rarely used to study analytic number theory problems.  In a recent paper \cite{HN18} we also use them to study the test vector problem for Waldspurger's period integral, and can effectively reprove the dichotomy and Tunnell-Saito's $\epsilon-$value test along the way whenever a Lie-algebra argument can apply. 

This paper shows that one can easily generalise the minimal vector of  \cite{HuNelsonSaha:17a} to a more general setting by making use of the compact induction theory on $\GL_n$ in \cite{BushnellKutzko:a}, and conveniently get a nontrivial bound for the sup norm. Such test vector can be uniquely identified by its property as in Corollary \ref{Cor:Minimal&Unique}. The main ingredient for getting sup norm upper bound is an explicit description of the matrix coefficient for the minimal vector, which serves two purposes. 
\begin{enumerate}
\item[(I)] It allows explicit computations for the formal degree, which can be related to the finite conductor and directly give a convexity bound for sup norm when using matrix coefficient as test function for the pre-trace formula.
\item[(II)] One can also see that the support of its matrix coefficient is concentrated in a neighbourhood of a torus, and we shall exploit this property to get an easy lattice point counting for the geometric side of pre-trace formula and get a power saving.
\end{enumerate}

We shall also go beyond supercuspidal representations by giving similar test vectors for parabolic inductions, which directly generalise the test vector used in \cite{nelson_microlocal_2016} and is closely related to the semisimple type in  \cite{BushnellKutzko:}. 
We shall choose the test function to be the restriction of the matrix coefficient of such minimal vector to proper compact subgroup, while maintaining similar properties (I) (II) as above. Whether such minimal vector is unique and whether the test function is capturing most information (meaning that the compact subgroup we chose is of bounded index in the support of the matrix coefficient when restricted to a maximal compact open subgroup) are  trickier questions, which we shall discuss in Remark \ref{Rem:UniquenessParabolic} and \ref{Rem:testfuncFormaldegree}. Under the assumption that $\pi$ has generic induction datum, the result in this case is consistent with the supercuspidal representation case.

We do remark that it's also possible to obtain sub-local bound for newforms on  $\GL_2$ in depth aspect, which is an ongoing project of the author with Abhishek Saha. It however requires more complicated discussion on the associated matrix coefficient, and doesn't seem easily generalisable to $\GL_n$.

\subsection{Comparison}
One can compare this result with the sup norm bound in \cite{BlomerMaga:}, where they obtained
\begin{equation}
\sup |F(g)|\ll \lambda^{\frac{n(n-1)}{4}(\frac{1}{2}-\delta)}.
\end{equation}
Here $\lambda$ is the Laplace eigenvalue for $F$.
Suppose that %the induction datum for $\pi_\infty$ is also generic, meaning that 
for $\pi_\infty$, the Satake parameters $\mu_i$ for $i=1,\cdots, n$ stay away from the walls of Weyl chambers, and in particular are of similar size. Then $\lambda\asymp_n |\mu_i|^2\asymp C(\pi)^{\frac{2}{n}}$ where $C(\pi)$ is the analytic conductor. Then the sup norm bound in eigenvalue aspect can be rewritten as
\begin{equation}
\sup |F(g)|\ll C(\pi)^{\frac{n-1}{4}-\delta'}.
\end{equation}
Thus our result is the exact analogue of \cite{BlomerMaga:} in the p-adic setting for generic induction datum. Our power saving is explicit and better due to the fact that we are exploiting the special properties of the minimal vector at ramified  places.

\subsection{Further possible generalisations}
For the sake of simplicity and conciseness, we prove our main result in the setting of representations of $\PGL_n$ with generic induction datum. Here we briefly discuss possible generalisations.

The strategy should also work for more general representations other than those with generic induction datum. Minimal vectors can still be defined in a similar fashion. The main differences are directly related to properties [I] and [II] for minimal vectors.
For [I], while one can still effectively estimate the 'formal degree',  there is no longer a simple relation between the normalised depth and the finite conductor. One would need to formulate the result in terms of more detailed information of the induction datum. 

For [II],  it is possible that the matrix coefficient of the minimal when restricted to a maximal compact subgroup is concentrated around a non-trivial Levy subgroup, instead of a simple torus. In that case, the lattice point counting problem would be more complicated (while still possible regarding \cite{Marshall:}). One can also avoid this issue by assuming that, for example for parabolically-induced representations, there exists a constant $\delta>0$ such that each normalised depth $c(\sigma_j)\geq \delta c(\pi)$. Similar condition for supercuspidal representations exists, and then the associated matrix coefficient is still concentrated around a torus.

In principle the method in this paper will also work for sup norm problems on classical groups regarding the similar construction of supercuspidal representations on them in \cite{stevens_supercuspidal_2008}.

As explained in  \cite{HuNelsonSaha:17a} these minimal vectors are the $p-adic$ analogue of micro-local lifts. It would also be interesting to see if one can easily get a similar upper bound  for micro-local lifts in the archimedean aspect.

\subsection{Acknowledgement}
The author would like to thank Simon Marshall, Philippe Michel, Paul Nelson, Abhishek Saha, Will Savin and Nicolas Templier for helpful discussions. 
This work is partially supported by NSF Grant No. DMS-1440140 during author's stay at MSRI in 2017 spring, and later on supported by SNF grant SNF-169247.

\section{Preliminary notations and results}

 Let $\lfloor x\rfloor$ denote the floor function for $x\in \R$, and $\lceil x\rceil$ denote the ceiling function.

Let $M_{n_1\times n_2}$ the $n_1\times n_2$ matrix algebra.  $M_{n}=M_{n\times n}$.

For a p-adic field $\F_v$, let $k_{\F_v}$ be its residue field, $\varpi_v$ be a uniformiser, $O_v$ be the ring of integers. $K_v$ is the maximal compact open subgroup of $\GL_n(\F_v)$. Let $\psi_v$ be an additive character of level 1 (instead of an unramified additive character in convention).

By abuse of notations subgroups which are compact-mod-center will also be referred to as compact subgroups.

\begin{defn}
Let $\pi$ be an irreducible representation of $G$, $H\subset G$ is a compact open subgroup and $\rho$ is an irreducible representation of $H$. $\pi$ is said to contain $\rho$ if $\Hom_{H}(\rho, \pi)\neq 0$.
\end{defn}

\begin{defn}
For $i=1,2$ let $H_i$ be compact open subgroups of $G$ and $\rho_i$ be irreducible representations of $H_i$. We say $g\in G$ intertwines $\rho_1$ with $\rho_2$ if
$$\Hom_{H_1\cap H_2^g}(\rho_1,\rho_2^g)\neq 0.$$
Here $H_2^g=g^{-1}H_2 g$ and $\rho_2^g$ is the representation of $H_2^g$ given by $\rho_2^g(g^{-1}hg)=\rho_2(h)$. In the case $H_1=H_2=H$ and $\rho_1=\rho_2=\rho$, we simply say $g$ intertwines $\rho$.
\end{defn}
\begin{lem}\label{Lem:alwaysIntertwine}
Let $\pi$ be an irreducible smooth representation of $G$. Let $\rho_i$ be irreducible smooth representations of $H_i$ which are  compact open subgroups of $G$. Then there exists $g\in G$ which intertwines $\rho_i$.
\end{lem}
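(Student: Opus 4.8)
The plan is to run the standard Frobenius reciprocity plus Mackey argument, which in this self-contained setting I would carry out by hand. I read the statement as including the hypothesis that $\pi$ contains both $\rho_1$ and $\rho_2$ (that is, $\Hom_{H_i}(\rho_i,\pi)\neq 0$ for $i=1,2$), since otherwise $\pi$ would play no role and the conclusion can fail --- e.g.\ the trivial character and a suitable ramified character of a maximal compact subgroup are never intertwined, but also cannot both occur in one irreducible $\pi$.

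First I would fix, for $i=1,2$, a nonzero $H_i$-equivariant embedding $\iota_i\colon\rho_i\hookrightarrow\pi$ and write $W_i=\iota_i(\rho_i)\subset\pi$ for its image, so that $H_i$ acts on $W_i$ through $\rho_i$. Since $H_2$ is a compact open subgroup of $G$ it is profinite, so $\pi|_{H_2}$ is a direct sum of finite-dimensional subrepresentations, each factoring through a finite quotient of $H_2$ and hence semisimple; thus $\pi|_{H_2}$ is semisimple, the inclusion $W_2\hookrightarrow\pi$ splits as $H_2$-modules, and I obtain a nonzero $H_2$-equivariant projection $P\colon\pi\twoheadrightarrow W_2$. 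Next, irreducibility of $\pi$ forces the $G$-submodule generated by $W_1$ to be all of $\pi$, i.e.\ $\pi=\mathrm{span}_{\C}\{\,g\cdot w : g\in G,\ w\in W_1\,\}$; writing a vector on which $P$ does not vanish as a finite combination of such $g\cdot w$ forces $P(gW_1)\neq 0$ for at least one $g\in G$.

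It remains to check that this $g$ intertwines $\rho_1$ with $\rho_2$ in the sense of the definition above. The subspace $gW_1$ is stable under $gH_1g^{-1}$, on which it realises the conjugated representation ${}^{g}\rho_1\colon x\mapsto\rho_1(g^{-1}xg)$. Restricting $P$ to $gW_1$ and to the subgroup $H_2\cap gH_1g^{-1}$ --- which stabilises both $gW_1$ and $W_2$ and over which $P$ is equivariant --- yields a nonzero element of $\Hom_{H_2\cap gH_1g^{-1}}\bigl({}^{g}\rho_1,\rho_2\bigr)$. Conjugating by $g^{-1}$, the group $H_2\cap gH_1g^{-1}$ becomes $H_1\cap g^{-1}H_2g=H_1\cap H_2^{g}$, and with $y=g^{-1}xg$ the identity $\rho_2^{g}(y)=\rho_2(gyg^{-1})$ identifies the target with $\rho_2^{g}$, giving a nonzero element of $\Hom_{H_1\cap H_2^{g}}\bigl(\rho_1,\rho_2^{g}\bigr)$ --- exactly the required intertwining. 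I do not expect a genuine obstacle: the content is just the restriction--induction formalism, and the only steps that need a line of care are the semisimplicity of $\pi|_{H_2}$ (equivalently, the existence of the equivariant retraction $P$) and the bookkeeping of conjugation conventions so that the $g$ produced matches $H_2^{g}=g^{-1}H_2g$, $\rho_2^{g}(g^{-1}hg)=\rho_2(h)$. Equivalently one can package everything as: $\Hom_{H_1}(\rho_1,\pi)\neq 0$ gives a surjection from the compact induction $\Ind_{H_1}^{G}\rho_1$ onto $\pi$; composing with the projection $\pi\twoheadrightarrow\rho_2$ and applying Mackey's formula to $(\Ind_{H_1}^{G}\rho_1)|_{H_2}=\bigoplus_{g\in H_2\backslash G/H_1}\Ind_{H_2\cap gH_1g^{-1}}^{H_2}({}^{g}\rho_1)$ shows some double coset $H_2gH_1$ must contribute, and that $g$ is the one we want.
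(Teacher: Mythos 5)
The paper states this lemma without proof (it is the standard intertwining lemma from Bushnell--Kutzko, quoted as background), so there is no argument of the paper's to compare against line by line; what can be said is that your proof is correct and is the standard one. You were right to read in the implicit hypothesis that $\pi$ contains both $\rho_1$ and $\rho_2$ --- the statement is vacuously about $\pi$ otherwise and false in general (already for $\GL_1$: the trivial character and a ramified character of $O^*$ are never intertwined, but never occur in the same irreducible $\pi$) --- and this is clearly what the paper intends, given that the definition of ``contains'' immediately precedes the lemma. Your two steps that need care are both fine: smoothness of $\pi$ plus compactness of $H_2$ does give semisimplicity of $\pi|_{H_2}$ and hence the equivariant projection $P$, and your conjugation bookkeeping lands exactly on the paper's convention $\Hom_{H_1\cap H_2^{g}}(\rho_1,\rho_2^{g})\neq 0$ with $H_2^{g}=g^{-1}H_2g$, $\rho_2^{g}(g^{-1}hg)=\rho_2(h)$. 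Your alternative packaging via $\Hom_{H_1}(\rho_1,\pi)\neq 0\Rightarrow$ a surjection $c\text{-}\Ind_{H_1}^{G}\rho_1\twoheadrightarrow\pi$, followed by the projection onto $\rho_2$ and the Mackey decomposition, is also consistent with the toolkit the paper itself sets up (its Mackey lemma for compact induction plus Frobenius reciprocity), and is in fact the same mechanism the paper later uses in the proof of Corollary \ref{Cor:Minimal&Unique}; either write-up would serve as a proof of the lemma.
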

\begin{lem}[Mackey theory for compact induction]	Let $K$, $H$ be compact subgroups of $G$ and $\Lambda$ be an irreducible smooth representation of $K$. Then
\begin{equation}
c-\Ind_{K}^G\Lambda|_{H}=\bigoplus\limits_{g\in K\backslash G/H} \Ind_{H\cap K^g}^H( \Lambda^g|_{H\cap K^g})
\end{equation}
\end{lem}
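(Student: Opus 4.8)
The plan is to realize the compact induction concretely as a space of vector-valued functions on $G$ and then peel off the restriction to $H$ along the double coset decomposition $G=\bigsqcup_{g\in K\backslash G/H}KgH$. First I would write $W=c\text{-}\Ind_K^G\Lambda$ as the space of smooth functions $f\colon G\to V_\Lambda$ satisfying $f(\kappa x)=\Lambda(\kappa)f(x)$ for all $\kappa\in K$ and having support compact modulo $K$, with $G$ acting by right translation $(x\cdot f)(y)=f(yx)$. Restricting the action to $H$, I want to identify this $H$-module.

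Second, for a double coset $\mathcal{O}=KgH$ let $W_{\mathcal{O}}\subseteq W$ be the subspace of functions supported in $\mathcal{O}$. Since $H$ acts by right translation it preserves each $\mathcal{O}$, so $W_{\mathcal{O}}$ is an $H$-submodule. I would then check that $W|_H=\bigoplus_{\mathcal{O}}W_{\mathcal{O}}$: any $f\in W$ has support contained in finitely many right cosets $Kx$, hence in finitely many double cosets, so $f$ is the finite sum of its restrictions to the various $\mathcal{O}$ meeting its support, and each such restriction is again smooth and compactly supported modulo $K$, hence lies in $W$.

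Third, I would fix $g$ and identify $W_{KgH}$ with $\Ind_{H\cap K^g}^H(\Lambda^g|_{H\cap K^g})$ via $f\mapsto\phi$ where $\phi(h)=f(gh)$. For $h'\in H\cap K^g$ we have $gh'g^{-1}\in K$, so $f(gh'h)=f((gh'g^{-1})gh)=\Lambda(gh'g^{-1})f(gh)=\Lambda^g(h')\phi(h)$, i.e. $\phi(h'h)=\Lambda^g(h')\phi(h)$; conversely any such $\phi$ extends uniquely to an $f$ supported on $KgH$ by setting $f(\kappa gh)=\Lambda(\kappa)\phi(h)$, which is well defined precisely because the ambiguity in writing an element of $KgH$ as $\kappa gh$ is governed by $H\cap K^g$. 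This bijection intertwines the right $H$-translation actions, so it is an isomorphism of $H$-modules, and since $H$ is compact modulo center and $H\cap K^g$ is open of finite index in it (modulo the common central part), the compact induction coincides with the ordinary induction. Assembling the three steps yields the claimed decomposition.

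The main obstacle is purely the bookkeeping around the convention that ``compact subgroup'' means compact modulo center: one must verify that a function supported on a single double coset $KgH$ is automatically compactly supported modulo $K$ (which reduces to the finiteness of $H/(H\cap K^g)$ modulo center), and that the smoothness condition is preserved under the restriction and the above identification. These points are routine given the conventions fixed at the start of this section, so there is no genuine difficulty; the statement is the standard Mackey formula for compact induction and could alternatively be quoted from \cite{BushnellKutzko:a}.
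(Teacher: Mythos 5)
Your argument is correct and is exactly the route the paper intends: the paper's proof is the one-line remark that one argues as in the finite-group Mackey theorem using the explicit function model of compact induction, and your three steps (function model, decomposition of the support along double cosets $KgH$, identification of each piece with $\Ind_{H\cap K^g}^{H}(\Lambda^g)$ via $\phi(h)=f(gh)$) are precisely that argument written out, including the correct handling of the compact-mod-center convention. Nothing further is needed.
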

\begin{proof}
Similar to the proof for Mackey theory for finite groups, as one can give a basis for compact induction explicitly.
\end{proof}

\begin{lem}
Let $\pi=c-\Ind_J^G \Lambda$ be compactly induced representation for some irreducible representation $\Lambda$ of a compact open subgroup $J$. If $g$ intertwines $\Lambda$ iff $g\in J$, then $\pi$ is irreducible and supercuspidal.
\end{lem}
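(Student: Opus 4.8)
The plan is to run the standard Frobenius--Mackey argument for types, as in \cite{BushnellKutzko:a}: the hypothesis that the intertwining set of $\Lambda$ equals $J$ will make every double-coset sum that appears collapse to a single term. Since $\Lambda$ is smooth irreducible and $J$ is compact mod center, Schur's lemma shows $\Lambda$ is finite-dimensional with a central character, and I work with the Hecke algebra relative to that character so that the idempotent $e_\Lambda$ used below makes sense (in the application $G=\PGL_n$ has trivial center and $J$ is genuinely compact, the clean case). Write $\pi=c-\Ind_J^G\Lambda$ and let $W\subset\pi$ be the subspace of functions supported on the single coset $J$; under right translation $W\cong\Lambda$ as $J$-representations, and $W$ generates $\pi$ as a $G$-representation, because any element of $\pi$ is a finite sum of right translates of functions supported on $J$.

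The crucial step is to pin down the $\Lambda$-isotypic part of $\pi|_J$. Applying the Mackey lemma above with $K=H=J$ gives $\pi|_J\cong\bigoplus_{g\in J\backslash G/J}\Ind_{J\cap J^g}^J(\Lambda^g|_{J\cap J^g})$, the summand indexed by $JgJ$ being the functions in $\pi$ supported on the (open and closed) double coset $JgJ$. By Frobenius reciprocity inside the compact group $J$, $\Hom_J(\Lambda,\Ind_{J\cap J^g}^J\Lambda^g)=\Hom_{J\cap J^g}(\Lambda,\Lambda^g)$, which is nonzero exactly when $g$ intertwines $\Lambda$, hence by hypothesis only when $g\in J$. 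Therefore the $\Lambda$-isotypic subspace of the (semisimple) $J$-representation $\pi|_J$ is precisely $W$; equivalently, the idempotent $e_\Lambda$ attached to $\Lambda$, regarded in $\mathcal H(G)$, acts on $\pi$ as the projection onto $W$. (Combined with the adjunction $\Hom_G(c-\Ind_J^G\Lambda,\pi)\cong\Hom_J(\Lambda,\pi|_J)$ this also yields $\mathrm{End}_G(\pi)\cong\C$, but I will not need that.)

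Irreducibility now follows quickly. Let $0\neq\pi'\subseteq\pi$ be a $G$-subrepresentation; it suffices to show $W\subseteq\pi'$, since $W$ generates $\pi$. Choose $0\neq f\in\pi'$ and a coset $Jg$ on which $f$ is nonzero; replacing $f$ by $\pi(g)f\in\pi'$ we may assume $f(1)\neq 0$. Then $\pi(e_\Lambda)f$ lies in $\pi'$ (which is $\mathcal H(G)$-stable) and in $W$ (by the previous step), and evaluating at the identity, $(\pi(e_\Lambda)f)(1)=\Lambda(e_\Lambda)f(1)=f(1)\neq 0$, using $f(j)=\Lambda(j)f(1)$ for $j\in J$ and $\Lambda(e_\Lambda)=\mathrm{id}$. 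Hence $W\cap\pi'\neq 0$, and since $W\cong\Lambda$ is $J$-irreducible this forces $W\subseteq\pi'$, so $\pi'=\pi$ and $\pi$ is irreducible.

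For supercuspidality I would produce a nonzero matrix coefficient compactly supported modulo the center. Realising $\tilde\pi$ inside $\Ind_J^G\tilde\Lambda$ through the invariant pairing $\langle f,w\rangle=\int_{J\backslash G}\langle f(x),w(x)\rangle\,dx$, pick $f_0\in W$ and $\tilde f_0$ in the copy of $\tilde\Lambda$ consisting of functions supported on $J$, normalised so that $\langle f_0(1),\tilde f_0(1)\rangle\neq 0$; then $g\mapsto\langle\pi(g)f_0,\tilde f_0\rangle$ equals, up to a nonzero constant, $g\mapsto\langle f_0(g),\tilde f_0(1)\rangle$, which is supported on $J$ and nonzero at $g=1$. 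An irreducible smooth representation possessing a matrix coefficient compactly supported mod center is admissible and supercuspidal (Harish-Chandra; see also \cite{BushnellKutzko:a}), which finishes the proof. The one place the intertwining hypothesis is essential, and also the one step needing care, is the identification of the $\Lambda$-isotypic part of $\pi|_J$ with $W$ in the second paragraph --- the bookkeeping with the Mackey summands and with the center --- whereas the rest is a formal application of Mackey theory, Frobenius reciprocity, Schur's lemma, and the matrix-coefficient criterion for supercuspidality.
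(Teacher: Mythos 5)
The paper does not include a proof of this lemma; it is stated as a standard result drawn from \cite{BushnellKutzko:a}. Your argument is correct and is essentially the standard one: Mackey restriction together with Frobenius reciprocity and the hypothesis that the intertwining set of $\Lambda$ is exactly $J$ shows the $\Lambda$-isotypic component of $\pi|_J$ is precisely the subspace $W$ of functions supported on $J$; applying the idempotent $e_\Lambda$ forces any nonzero $G$-submodule to meet (hence contain) $W$, which generates $\pi$, giving irreducibility; and pairing $W$ with the corresponding subspace of $\Ind_J^G\tilde\Lambda$ produces a matrix coefficient supported on $J$, so Harish-Chandra's criterion yields admissibility and supercuspidality. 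Two cosmetic nits that do not affect correctness: to arrange $f(1)\neq 0$ you should translate by the element $jg$ where $f(jg)\neq 0$, not merely by $g$; and the inclusion you describe between $\tilde\pi$ and $\Ind_J^G\tilde\Lambda$ is worded in the reverse direction from the usual identification, though the vector $\tilde f_0$ you use is indeed a smooth vector of $\pi^*$ and hence lies in $\tilde\pi$ as needed.
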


\section{Minimal vector}
This section is purely local and we shall omit sub-script $v$ from all notations.
\subsection{Supercuspidal representation and minimal vector}
%Twist minimal supercuspidal representation can be constructed from the so-called maximal types by 
Here we follow \cite{BushnellKutzko:a} closely to review useful results for the construction of supercuspidal representations on $\GL_n$. %We shall only summarise the results useful to us. 
\begin{defn}\label{Defn:standardHerediatoryorder}
For a fixed interger $e=e_\mathfrak{A}$ dividing $n$ and $m=\frac{n}{e}$, a principal hereditary order $\mathfrak{A}$ in $M_n$ is up to a conjugation of the following shape
\begin{equation}\label{Eq:standardHerediatoryorder}
\mathfrak{A}= \left(\begin{array}{cccc}
M_m(O_\F)&M_m(O_\F) &\cdots & M_m(O_\F)\\
\varpi M_m(O_\F) &M_m(O_\F) &\cdots &M_m(O_\F)\\
\vdots &\vdots &\ddots &\vdots \\
\varpi M_m(O_\F) &\varpi M_m(O_\F)&\cdots &M_m(O_\F)
\end{array}\right).
\end{equation}
Its Jacobson radical $\mathcal{B}$ is then of shape
\begin{equation}
\mathcal{B}= \left(\begin{array}{cccc}
\varpi M_m(O_\F)&M_m(O_\F) &\cdots & M_m(O_\F)\\
\varpi M_m(O_\F) &\varpi M_m(O_\F) &\cdots &M_m(O_\F)\\
\vdots &\vdots &\ddots &\vdots \\
\varpi M_m(O_\F) &\varpi M_m(O_\F)&\cdots &\varpi M_m(O_\F)
\end{array}\right).
\end{equation}
It has the property that
\begin{equation}
\mathcal{B}^e=\varpi \mathfrak{A}.
\end{equation}
Denote $\mathfrak{A}_0$ to be the  hereditary order $M_n(O_\F)$ % in the shape of \eqref{Eq:standardHerediatoryorder} 
with $e_\mathfrak{A}=1$, and $\mathcal{B}_0$ to be its associated Jacobson radical $\varpi M_n(O_\F)$.
\end{defn}
Thus by mutiplying with a suitable negative power of $\varpi$ one can define $\mathcal{B}^i$ for $i<0$.
This gives rise to a filtration of orders $\{\mathcal{B}^i\}_{i\in \Z}$, which defines a semi-valuation $v_\mathfrak{A}$ on $M_n$
$$v_\mathfrak{A}(x)=\min\limits_{i}\{x\in \mathcal{B}^i\}.$$
One can directly verify the following approximation result.
\begin{cor}\label{Cor:herediatoryorderApproximation}
Let $\mathcal{B}$ be  in the standard form as in Definition \ref{Defn:standardHerediatoryorder}, and $\mathcal{B}_0$ be as above. Then for any $i\in \Z$,
\begin{equation}
\mathcal{B}_0^{\lceil \frac{i-1}{e_\mathfrak{A}}\rceil+1}\subset \mathcal{B}^i\subset \mathcal{B}_0^{\lfloor \frac{i}{e_\mathfrak{A}}\rfloor}.
\end{equation}
\end{cor}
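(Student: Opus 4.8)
The plan is to make the filtration $\{\mathcal{B}^i\}_{i\in\Z}$ completely explicit in block form and then reduce both inclusions to an elementary comparison of powers of $\varpi$. Decomposing a matrix $x\in M_n(\F)$ into $e\times e$ blocks $x=(x_{ab})_{1\le a,b\le e}$ with $x_{ab}\in M_m(\F)$, I would first establish that
\[
\mathcal{B}^i=\bigl\{\,x\in M_n(\F):\ x_{ab}\in\varpi^{\lceil (i+a-b)/e\rceil}M_m(O_\F)\ \text{ for all }1\le a,b\le e\,\bigr\},\qquad i\in\Z.
\]
For $i=0$ and $i=1$ this is exactly the displayed shape of $\mathfrak{A}$ and of $\mathcal{B}$ in Definition \ref{Defn:standardHerediatoryorder}, since on the relevant blocks the inequalities $0\le a-b\le e-1$ force the ceiling to equal $0$ or $1$. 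For $2\le i\le e$ one reads the formula off by multiplying out $\mathcal{B}^i=\mathcal{B}\cdots\mathcal{B}$ block by block (using $\mathcal{B}^i\mathcal{B}^j=\mathcal{B}^{i+j}$), and for general $i$ one extends using $\mathcal{B}^{i+e}=\varpi\mathcal{B}^i$; the displayed exponent is consistent with this because $\lceil(i+e+a-b)/e\rceil=\lceil(i+a-b)/e\rceil+1$. Alternatively this block description of a principal hereditary order and its radical powers is standard and can simply be quoted from \cite{BushnellKutzko:a}.

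Next, since $\mathcal{B}_0^{\,j}=\varpi^{j}M_n(O_\F)$ has every block equal to $\varpi^{j}M_m(O_\F)$, and a containment of lattices in $M_n$ can be checked block by block with $\varpi^{N}M_m(O_\F)\subseteq\varpi^{j}M_m(O_\F)$ iff $N\ge j$, I would observe that $\mathcal{B}^i\subseteq\mathcal{B}_0^{\,j}$ holds iff $j\le\min_{a,b}\lceil(i+a-b)/e\rceil$ and $\mathcal{B}_0^{\,j}\subseteq\mathcal{B}^i$ holds iff $j\ge\max_{a,b}\lceil(i+a-b)/e\rceil$. The quantity $\lceil(i+a-b)/e\rceil$ is monotone in $a-b$, so over $1\le a,b\le e$ its minimum occurs at the corner block $(a,b)=(1,e)$ and its maximum at $(a,b)=(e,1)$; the elementary identities $\lceil(i+1-e)/e\rceil=\lfloor i/e\rfloor$ and $\lceil(i+e-1)/e\rceil=\lceil(i-1)/e\rceil+1$ then give
\[
\min_{a,b}\lceil(i+a-b)/e\rceil=\lfloor i/e\rfloor,\qquad \max_{a,b}\lceil(i+a-b)/e\rceil=\lceil(i-1)/e\rceil+1.
\]
Substituting these into the two equivalences yields exactly $\mathcal{B}_0^{\lceil(i-1)/e\rceil+1}\subseteq\mathcal{B}^i\subseteq\mathcal{B}_0^{\lfloor i/e\rfloor}$; in fact both inclusions come out sharp.

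I do not expect a genuine obstacle here — the statement is elementary once the block description of $\mathcal{B}^i$ is in place, which is precisely why the author calls it a direct verification. The only places that need a little care are the bookkeeping with $\lfloor\cdot\rfloor$ and $\lceil\cdot\rceil$ so that the argument stays uniform in the sign of $i$, and checking that the block formula for $\mathcal{B}^i$ propagates correctly from the range $0\le i\le e$ to all $i\in\Z$ via the relation $\mathcal{B}^{i+e}=\varpi\mathcal{B}^i$.
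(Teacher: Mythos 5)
Your proposal is correct and is exactly the ``direct verification'' the paper asserts without writing out: the explicit block formula $\mathcal{B}^i=\{x:\,x_{ab}\in\varpi^{\lceil (i+a-b)/e_\mathfrak{A}\rceil}M_m(O_\F)\}$ is the standard description of the radical filtration of a principal hereditary order, and your reduction to comparing the extreme corner blocks $(a,b)=(1,e)$ and $(e,1)$, via $\lceil(i+1-e)/e\rceil=\lfloor i/e\rfloor$ and $\lceil(i+e-1)/e\rceil=\lceil(i-1)/e\rceil+1$, yields both inclusions (indeed sharply). Since the paper supplies no argument beyond ``one can directly verify,'' your write-up simply fills in that verification correctly.
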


\begin{defn}\label{Defn:maximalbeta}
Let $\beta\in M_n$ with $v_\mathfrak{A}(\beta)=-j<0$ be such that $\L=\F[\beta]$ is a field of degree $n$ (which is maximal possible) with the property that $\L^*$ normalises $\mathfrak{A}$ and its ramification index $e_{\L/\F}=e_\mathfrak{A}$.
\end{defn}
%According to \cite{BushnellKutzko:a}, supercuspidal representations of $\GL_n$ always contains a fundamental stratum

 $v_\mathfrak{A}$  is a semi-valuation  with respect to $\L$ in the sense that 
\begin{equation}\label{Eq:SemiValuation}
 \vv(lx)=v_\L(l)\vv(x), \forall l\in \L^*,
\end{equation} 
where $v_\L(x)$ is the normalised p-adic evaluation on the local field $\L$.

Let $\alpha_\beta$ denote the map
\begin{equation}
\alpha_\beta(x)=\beta x- x\beta.
\end{equation}
Define an interger 
\begin{equation}\label{Eq:defnOFk0}
 k_0(\beta,\mathfrak{A})=\max\limits_{i}\{\exists x\in \mathfrak{A}, x\notin \mathcal{B}+O_\L, \alpha_\beta(x)\in \mathcal{B}^k\}.
\end{equation}
%In the case $\F[\beta]=\F$, define $k_0(\beta,\mathfrak{A})=-\infty$.
\begin{defn}\label{Defn:minimalBeta}
$\beta$ is called 'minimal' over $\F$ in  \cite{BushnellKutzko:a} if it satisfies
\begin{enumerate}
\item $v_\L(\beta)$ is coprime to $e_{\L/\F}$, and
\item $\varpi^{-v_\L(\beta)}\beta^{e_{\L/\F}}+\varpi_\L O_\L$ generates the residue field extension $k_\L/k_\F$. 
\end{enumerate}

\end{defn}

Note here $v_\L(\varpi^{-v_\L(\beta)}\beta^{e_{\L/\F}})=0$. $\beta$ being minimal over $\F$ is equivalent to that $$k_0(\beta,\mathfrak{A})=v_\mathfrak{A}(\beta).$$
$\beta$ being minimal is the analogue of the condition for principal series representations that the Satake parameters stay away from the walls of Weyl chamber, where the Galois group becomes the Weyl group.

While supercuspidal representations can be associated to general $\beta$ as in Definition \ref{Defn:maximalbeta}, we shall focus on those associated to the minimal ones, in which case the construction is slightly easier. 
\begin{defn}
Suppose that $\beta$ is minimal.
Let $U_\mathfrak{A}(i)=1+\mathcal{B}^i$.
Let $H^1=H^1(\beta,\mathfrak{A})=U_\L(1)U_\mathfrak{A}(\lfloor\frac{j}{2}\rfloor+1)$, $J^1=J^1(\beta,\mathfrak{A})=U_\L(1)U_\mathfrak{A}(\lfloor\frac{j+1}{2}\rfloor)$, $J=J(\beta,\mathfrak{A})=\L^*U_\mathfrak{A}(\lfloor\frac{j+1}{2}\rfloor)$. Note that $\lfloor\frac{j}{2}\rfloor+1=\lceil \frac{j+1}{2}\rceil$.
\end{defn}
Note that $J^1=H^1$ when $j=-v_\mathfrak{A}(\beta)$ is odd. Otherwise $J^1/H^1$ is a $n^2-n$ (which is alway even) dimensional  vector space over the residue field $k_\F$. 
\begin{defn}
A simple character of $H^1$ associated to $\beta$ is defined to be any character $\theta$ on $H^1$ such that
\begin{equation}
\theta |_{U_\mathfrak{A}(\lfloor\frac{j}{2}\rfloor+1)}(x)=\psi\circ \Tr_{M_n}(\beta(x-1)).
\end{equation}
\end{defn}
In particular $\theta$ is trivial on $U_\mathfrak{A}(j+1)$.
\begin{rem}
When $p$ is sufficiently large, one can also require that $$\theta|_{U_\L(1)}(x)=\psi\circ \Tr_{\L/\F}(\beta \log_p(x)).$$ For this definition, the simple character $\theta$ associated to $\beta$ is then unique. For the purpose of this paper, we shall just use the previous definition.
\end{rem}
The most important property for a simple character is about its intertwiner
\begin{lem}\label{Lem:intertwining&conjugacy}
$g$ intertwines $\theta$ iff $g\in J$ iff $g$ conjugates $\theta$ (i.e., $g$ intertwines $\theta$ and $(H^1)^g=H^1$).
\end{lem}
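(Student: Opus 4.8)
The plan is to establish the lemma through the chain of inclusions
\begin{gather*}
J\ \subseteq\ \{g:\ g\text{ conjugates }\theta\}\ \subseteq\ \{g:\ g\text{ intertwines }\theta\}\\
\subseteq\ \{g:\ g\text{ intertwines }\psi_\beta\}\ \subseteq\ J,
\end{gather*}
where $\psi_\beta$ denotes the character $1+x\mapsto\psi\circ\Tr_{M_n}(\beta x)$ of $U_\mathfrak{A}(\lfloor\tfrac j2\rfloor+1)$, that is, the restriction of $\theta$ to that subgroup. The second inclusion holds by the definition of ``conjugates''; the third holds because $\theta$ is one--dimensional, so ``$g$ intertwines $\theta$'' means $\theta=\theta^g$ on $H^1\cap(H^1)^g$, and this equality restricts to $\psi_\beta=\psi_\beta^g$ on $U_\mathfrak{A}(\lfloor\tfrac j2\rfloor+1)\cap U_\mathfrak{A}(\lfloor\tfrac j2\rfloor+1)^g$. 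Once the first and the last inclusions are proved, all the sets in the chain coincide with $J$, which gives the three--fold equivalence claimed (and in particular shows that every intertwiner of $\theta$ automatically normalises $H^1$).

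For the first inclusion I would write $g=\ell u$ with $\ell\in\L^\times$ and $u\in U_\mathfrak{A}(\lfloor\tfrac{j+1}2\rfloor)$. The element $\ell$ normalises $\mathfrak{A}$ by Definition~\ref{Defn:maximalbeta}, hence normalises every $\mathcal{B}^i$, every $U_\mathfrak{A}(i)$ and $U_\L(1)$, hence $H^1$; and $\theta^\ell=\theta$, because on $U_\mathfrak{A}(\lfloor\tfrac j2\rfloor+1)$ we get $\theta^\ell(1+x)=\psi\circ\Tr_{M_n}(\ell\beta\ell^{-1}x)=\theta(1+x)$ by $\ell\beta=\beta\ell$ and cyclicity of the trace, conjugation by $\ell$ is trivial on the commutative group $U_\L(1)$, and these two subgroups generate $H^1$. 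For $u$: if $j$ is odd then $U_\mathfrak{A}(\lfloor\tfrac{j+1}2\rfloor)\subseteq H^1$ and there is nothing to prove; if $j$ is even then $u\in U_\mathfrak{A}(j/2)$ sits one level below the defining level $j/2+1=\lfloor\tfrac j2\rfloor+1$ of $H^1$, and since $e_\mathfrak{A}\ge1$ all commutators $[u,h]$, $h\in H^1$, lie in $U_\mathfrak{A}(j/2+1)\subseteq H^1$, so $u$ normalises $H^1$ and $\theta^u(h)=\theta([u,h])\,\theta(h)$. Writing $[u,h]=1+z$, a level count then gives $\psi\circ\Tr_{M_n}(\beta z)=1$ --- for the part of $h$ in $U_\L(1)$ one first rearranges the trace using that $h-1\in\L=\F[\beta]$ commutes with $\beta$, and one uses that $\psi\circ\Tr_{M_n}$ kills the Jacobson radical $\mathcal{B}$ since its diagonal blocks lie in $\varpi M_m(O_\F)$ --- so $\theta^u=\theta$.

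The last inclusion $\{g:\ g\text{ intertwines }\psi_\beta\}\subseteq J$ is the substantial part. By the first inclusion the intertwining set of $\psi_\beta$ is stable under left and right multiplication by $J$, so it suffices to test $g$ on a set of $J$--double coset representatives of $G$ adapted to the affine structure attached to $\mathfrak{A}$. For such $g$ the intertwining condition, after applying cyclicity of the trace, becomes $\psi\circ\Tr_{M_n}\bigl((\beta-g\beta g^{-1})x\bigr)=1$ for all $x$ in a suitable ideal $\mathcal{B}^\bullet$; by non--degeneracy of $\psi\circ\Tr_{M_n}$ on the filtration $\{\mathcal{B}^i\}$ --- transported to the $M_n(O_\F)$--filtration via Corollary~\ref{Cor:herediatoryorderApproximation}, where triviality on the level--one ideal is transparent --- this forces $g\beta g^{-1}\equiv\beta$ modulo $\mathcal{B}^m$ with $m$ of size roughly $-\lfloor\tfrac j2\rfloor$, i.e.\ conjugation by $g$ moves $\beta$ by less than half its depth. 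Minimality now enters: by \eqref{Eq:defnOFk0} and the characterisation of minimality recorded after Definition~\ref{Defn:minimalBeta}, $k_0(\beta,\mathfrak{A})=v_\mathfrak{A}(\beta)=-j$, so the map $\alpha_\beta$ has the smallest possible ``kernel'' $\mathcal{B}+O_\L$ throughout this range; combined with $\L=\F[\beta]$ being a maximal subfield, the only $g$ disturbing $\beta$ this little are those in $\L^\times U_\mathfrak{A}(\lfloor\tfrac{j+1}2\rfloor)=J$. This is precisely the intertwining theorem for simple strata of \cite{BushnellKutzko:a}, and minimality of $\beta$ is exactly what turns it from an induction over a defining sequence into a single step.

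I expect the main obstacle to be this last step: converting the half--depth fixing condition on $\beta$ into membership in $J$. Concretely it amounts to analysing $\alpha_\beta$ on the graded pieces $\mathcal{B}^i/\mathcal{B}^{i+1}$ and showing that minimality makes $\beta$ sufficiently regular there, so that the conjugation--fixed part is governed by $\L$; everything else is bookkeeping with the floor/ceiling exponents and with the passage between the $\mathfrak{A}$-- and $M_n(O_\F)$--filtrations when invoking the level--one triviality of $\psi\circ\Tr_{M_n}$.
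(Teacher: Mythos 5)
The paper does not prove this lemma at all: it is quoted verbatim from the Bushnell--Kutzko theory of simple characters \cite{BushnellKutzko:a}, where (for a minimal stratum with $\L=\F[\beta]$ a maximal field, so that the centralizer of $\beta$ is $\L$) the intertwining of $\theta$ is computed to be $J^1\L^\times J^1=J$. So there is no in-paper argument to compare against; what you have written is essentially the standard derivation from that source. Your chain of inclusions is the right architecture, and the two ``easy'' links are sound: conjugating implies intertwining by definition, intertwining of the character $\theta$ restricts to intertwining of $\psi_\beta$ on $U_\mathfrak{A}(\lfloor j/2\rfloor+1)\cap U_\mathfrak{A}(\lfloor j/2\rfloor+1)^g$, and the verification that $J$ normalises $\theta$ goes through even with the paper's loose definition of simple character (the $\L^\times$ part is exactly your trace-cyclicity argument). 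One caveat on your ``level count'' for the $u$-part with $h\in U_\L(1)$: the naive bound puts $\beta\bigl([u,h]-1\bigr)$ only in $\mathcal{B}^{\,1-j/2}$, not in $\mathcal{B}$, so triviality of $\psi\circ\Tr$ on $\mathcal{B}$ alone does not finish it; you must apply the rearrangement $\Tr(\beta a b^k b)=\Tr(\beta b a b^k)$ (using $[\beta,b]=0$ and cyclicity) to every pure-$\L$ tail in the series expansion of $u^{-1}h^{-1}$, after which the surviving terms carry at least two factors from $\mathcal{B}^{\lfloor (j+1)/2\rfloor}$ and do land in $\mathcal{B}$. Your parenthetical hints at this, but as written the step is glossed.

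The genuinely hard link, $\{g:\ g\text{ intertwines }\psi_\beta\}\subseteq J$, you do not prove: you describe its shape (intertwining forces $g\beta g^{-1}\equiv\beta$ to roughly half the depth, and minimality plus maximality of $\L$ then pins $g$ down to $\L^\times U_\mathfrak{A}(\lfloor\tfrac{j+1}{2}\rfloor)$) and then invoke the intertwining theorem for simple/minimal strata from \cite{BushnellKutzko:a}. That is a correct reduction, and it is no worse than what the paper itself does---the lemma is a citation there too---but be aware that your write-up is a reduction to that theorem, not an independent proof of it; the passage from ``$\alpha_\beta$ has smallest possible kernel on the graded pieces'' to ``the fixed set is governed by $\L$'' is precisely the content of the cited result and would need the full (if single-step, thanks to minimality) Bushnell--Kutzko argument if you wanted the lemma self-contained.
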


The next step is to extend a simple character to be a representation of $J^1$ in the case $J^1\neq H^1$.
\begin{lem}
There exists a unique irreducible representation $\eta(\theta)$ of $J^1$ such that $\eta(\theta)|_{H^1}$ contains $\theta$. Moreover $\eta(\theta)|_{H^1}$ is a multiple of $\theta$ and $$\dim (\eta(\theta))=(J^1:H^1)^{1/2}.$$
\end{lem}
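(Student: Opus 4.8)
The plan is to recognise $(J^1,\theta)$ as a finite Heisenberg datum and to produce $\eta(\theta)$ as its unique irreducible representation with ``central character'' $\theta$, by the classical Lagrangian-induction argument.

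\textbf{Setup.} First I would record that $H^1\triangleleft J^1$ (the congruence groups $U_{\mathfrak{A}}(k)$ are normal in $U_{\mathfrak{A}}(1)$, and $\L^*$ normalises $\mathfrak{A}$ and its filtration), that $[J^1,J^1]\subseteq H^1$, and that $V:=J^1/H^1$ is a $k_\F$-vector space. Since $J^1\subseteq J=\L^*U_{\mathfrak{A}}(\lfloor(j+1)/2\rfloor)$, Lemma~\ref{Lem:intertwining&conjugacy} shows that every $g\in J^1$ conjugates $\theta$, so $\theta$ is $J^1$-invariant and the commutator induces a well-defined alternating pairing
\[
h_\theta\colon V\times V\longrightarrow\C^\times,\qquad h_\theta(\bar x,\bar y)=\theta\big(x^{-1}y^{-1}xy\big).
\]
If $j=-v_{\mathfrak{A}}(\beta)$ is odd then $J^1=H^1$ and the statement is trivial with $\eta(\theta)=\theta$; so assume $j$ even, in which case minimality (namely $v_\L(\beta)$ coprime to $e_{\mathfrak{A}}$, which forces $e_{\mathfrak{A}}$ odd) guarantees that $\dim_{k_\F}V$ is even.

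\textbf{Non-degeneracy of $h_\theta$.} This is the crux. Writing $x=1+a$, $y=1+b$ with $a,b\in\mathcal{B}^{m}$, $m=\lfloor(j+1)/2\rfloor$, an expansion gives $x^{-1}y^{-1}xy\equiv 1+(ab-ba)$ modulo $U_{\mathfrak{A}}(j+1)$, on which $\theta$ is trivial; hence, by the defining formula for $\theta$ and cyclicity of the trace,
\[
h_\theta(\bar x,\bar y)=\psi\big(\Tr_{M_n}(\beta(ab-ba))\big)=\psi\big(\Tr_{M_n}(\alpha_\beta(a)\,b)\big).
\]
Because $\psi$ has level $1$ and the trace form on $M_n$ is non-degenerate, the filtration $\{\mathcal{B}^i\}$ is self-dual up to a shift, so $\bar x$ lies in the radical of $h_\theta$ exactly when $\alpha_\beta(a)$ lies in the $\Tr$-annihilator of $\mathcal{B}^m$. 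Now I would invoke the defining property of $k_0(\beta,\mathfrak{A})$ together with the minimality of $\beta$, i.e. $k_0(\beta,\mathfrak{A})=v_{\mathfrak{A}}(\beta)=-j$: this says that $\alpha_\beta$ has the least possible ``defect'' on the relevant filtration quotients, its kernel being accounted for precisely by $O_\L$. Tracing this through, $\alpha_\beta(a)$ lies in that annihilator iff $a\in\mathcal{B}^{\lfloor j/2\rfloor+1}+O_\L$, i.e. iff $x\in H^1$. Thus $h_\theta$ is non-degenerate and $(V,h_\theta)$ is a symplectic $k_\F$-space with $|V|=(J^1:H^1)$.

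\textbf{Construction and uniqueness.} Choose a maximal isotropic subspace $W\subseteq(V,h_\theta)$ and let $H'\subseteq J^1$ be its preimage; then $[H',H']\subseteq\ker(\theta|_{H^1})$, so $\theta$ extends to a character $\tilde\theta$ of $H'$. Set $\eta(\theta):=\Ind_{H'}^{J^1}\tilde\theta$. A Mackey computation — valid precisely because $W^\perp=W$, which is the non-degeneracy of $h_\theta$ — shows $\eta(\theta)$ is irreducible of dimension $(J^1:H')=(J^1:H^1)^{1/2}$, and the same Mackey decomposition gives $\eta(\theta)|_{H^1}=(J^1:H^1)^{1/2}\cdot\theta$; different choices of $W$ or $\tilde\theta$ yield isomorphic representations. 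Uniqueness is the Stone--von Neumann / Mackey theorem for the finite Heisenberg group $J^1/\ker(\theta|_{[J^1,J^1]})$: any irreducible of $J^1$ whose restriction to $H^1$ involves $\theta$ has the correct central character and is therefore isomorphic to $\eta(\theta)$. I expect the non-degeneracy of $h_\theta$ — turning the equality $k_0(\beta,\mathfrak{A})=v_{\mathfrak{A}}(\beta)$ into the exact identification of the radical of $h_\theta$ with the trivial subspace — to be the only substantive point; the rest is the formal representation theory of Heisenberg groups. Alternatively one may simply cite \cite{BushnellKutzko:a}, where this lemma is established in full generality (for arbitrary $\beta$, at the cost of a heavier definition of $H^1$ and $J^1$).
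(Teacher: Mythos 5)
Your proof is correct and follows essentially the same route as the paper, which states this lemma as a result recalled from \cite{BushnellKutzko:a} and immediately describes $\eta(\theta)$ as exactly the Heisenberg representation $\Ind_{B^1}^{J^1}\tilde{\theta}$ induced from a polarising subgroup — the construction you carry out, with the non-degeneracy of the commutator pairing (via minimality of $\beta$) correctly identified as the one substantive input. The only quibble is your aside that $j$ even plus minimality forces $e_{\mathfrak{A}}$ odd and hence $\dim_{k_\F}V$ even: that detour is unnecessary (and shaky), since the paper records $\dim_{k_\F}J^1/H^1=n^2-n$, which is always even, and evenness in any case follows once the alternating pairing is shown non-degenerate.
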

When $J^1=H^1$ this lemma is trivial. Otherwise $\eta(\theta)$ is actually a Heisenberg representation constructed as follows. Let $B^1$ be  any intermediate group between $H^1$ and $J^1$ which polarises $J^1/H^1$ under pairing given by $(x,y)\mapsto \psi\circ \Tr_{M_n}(\beta xy)$, and $\tilde{\theta}$ be any extension of $\theta$ to $B^1$. Then 
\begin{equation}\label{Eq:HeisenbergExt}
 \eta(\theta)=\Ind_{B^1}^{J^1}\tilde{\theta}.
\end{equation}
The general theory of Heisenberg representation tells that $\eta(\theta)$ is independent of choices of $B^1$ and $\tilde{\theta}$. For uniformity we denote $B^1=H^1$ and $\tilde{\theta}=\theta$ in the case when $J^1=H^1$.

\begin{lem}
There exists a representation (usually not unique) $\Lambda$ of $J$ called simple type such that $\Lambda|_{J^1}=\eta(\theta)$. In particular $\dim \Lambda=\dim \eta(\theta)$. $g$ intertwines $\Lambda$ iff $g\in J$.
\end{lem}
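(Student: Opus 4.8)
The plan is to prove this by Clifford theory relative to the normal subgroup $J^1\trianglelefteq J$: first reduce the construction of $\Lambda$ to the stability of $\eta(\theta)$ under conjugation by $J$ together with the vanishing of one cohomological obstruction, and then read off the dimension and intertwining statements essentially formally.

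First I would record the group-theoretic setup. Since $\L^\times$ normalises $\mathfrak{A}$ it normalises every $U_\mathfrak{A}(i)=1+\mathcal{B}^i$ and also $U_\L(1)$, hence normalises $J^1=U_\L(1)U_\mathfrak{A}(\lfloor\frac{j+1}{2}\rfloor)$; as the factor $U_\mathfrak{A}(\lfloor\frac{j+1}{2}\rfloor)$ of $J$ already lies in $J^1$, this gives $J^1\trianglelefteq J$. Using $\L^\times\cap U_\mathfrak{A}(i)\subseteq U_\L(1)$ for $i\ge 1$ (because $\L\cap\mathcal{B}$ is the maximal ideal of $O_\L=\L\cap\mathfrak{A}$) one computes $J/J^1\cong\L^\times/U_\L(1)\cong\Z\times k_\L^\times$, an abelian group; note it is infinite, which already forces the non-uniqueness in the statement, since the set of extensions of $\eta(\theta)$ to $J$, once nonempty, is a torsor under the character group of $J/J^1$.

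Next I would show $\eta(\theta)$ is $J$-stable. For $g\in J$ I may take $g\in\L^\times$, the factor $U_\mathfrak{A}(\lfloor\frac{j+1}{2}\rfloor)\subset J^1$ acting by inner automorphisms. By Lemma~\ref{Lem:intertwining&conjugacy} such $g$ conjugates $\theta$, i.e. $(H^1)^g=H^1$ and $\theta^g=\theta$. Hence $\eta(\theta)^g$ is an irreducible representation of $(J^1)^g=J^1$ whose restriction to $H^1$ is a multiple of $\theta^g=\theta$, so by the uniqueness clause of the preceding lemma $\eta(\theta)^g\cong\eta(\theta)$. By Clifford theory, $\eta(\theta)$ then extends to an irreducible representation $\Lambda$ of $J$ exactly when the attached obstruction class in $H^2(J/J^1,\C^\times)$ vanishes; this is precisely (a special case of) the existence of $\beta$-extensions established in~\cite{BushnellKutzko:a}, which I would cite rather than reproduce and which I expect to be the only genuine obstacle. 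The mechanism there is, roughly, that $\eta(\theta)$ is a Heisenberg representation for the symplectic $k_\F$-space $J^1/H^1$, that conjugation embeds $J/J^1$ into the associated symplectic group, and that this image lifts through the metaplectic cover, so one tensors $\eta(\theta)$ with the relevant Weil representation. Over the free $\Z$-factor of $J/J^1$, generated by a prime element of $\L$, there is in any case no obstruction — a single choice of intertwiner defines the extension over that factor, since that element has infinite order modulo $J^1$ and so imposes no consistency condition — so all the difficulty sits in the finite factor $k_\L^\times$ and its interaction with the prime element.

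Finally, the numerical and intertwining claims are formal. Since $\Lambda|_{J^1}=\eta(\theta)$ we get $\dim\Lambda=\dim\eta(\theta)$. If $g\in J$ then $\Lambda^g\cong\Lambda$ via an inner automorphism, so $g$ intertwines $\Lambda$. Conversely, if $\Hom_{J\cap J^g}(\Lambda,\Lambda^g)\neq 0$, then restricting along $J^1\cap(J^1)^g\subseteq J\cap J^g$ and using $\Lambda|_{J^1}=\eta(\theta)$ shows $g$ intertwines $\eta(\theta)$; restricting further along $H^1\cap(H^1)^g$ and using that $\eta(\theta)|_{H^1}$ is a multiple of $\theta$ (the preceding lemma), a multiple of $\theta|_{H^1\cap(H^1)^g}$ maps nontrivially into a multiple of $\theta^g|_{H^1\cap(H^1)^g}$, which forces $\theta^g=\theta$ on $H^1\cap(H^1)^g$, i.e. $g$ intertwines $\theta$; by Lemma~\ref{Lem:intertwining&conjugacy}, $g\in J$. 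This completes the argument.
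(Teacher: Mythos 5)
Your proposal is sound, and it in fact supplies more detail than the paper does: this lemma sits in the paper's review of the Bushnell--Kutzko construction and is stated there with no proof at all, the intended justification being a citation of \cite{BushnellKutzko:a}. The parts you actually argue are correct: $J^1$ is normal in $J$ with $J/J^1\cong \L^\times/U_\L(1)\cong \Z\times k_\L^\times$; $J$-stability of $\eta(\theta)$ follows from Lemma \ref{Lem:intertwining&conjugacy} (any $g\in J$ conjugates $\theta$, and $\L^\times$ normalises $J^1$) together with the uniqueness of $\eta(\theta)$; the non-uniqueness of $\Lambda$ is exactly the torsor statement over characters of $J/J^1$; the dimension claim is immediate from $\Lambda|_{J^1}=\eta(\theta)$; and your converse intertwining argument is the standard one and is complete as written, since a nonzero $J\cap J^g$-intertwiner restricts to a nonzero $H^1\cap (H^1)^g$-intertwiner, $\Lambda|_{H^1}$ is a multiple of $\theta$, and Lemma \ref{Lem:intertwining&conjugacy} then gives $g\in J$. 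The one genuinely non-formal input --- existence of the extension, i.e.\ triviality of the class in $H^2(J/J^1,\C^\times)$, concretely that the commutator of intertwiners chosen for a prime element of $\L$ and for a generator of $k_\L^\times$ is the identity scalar --- you isolate correctly and delegate to \cite{BushnellKutzko:a}, which is precisely the source the paper itself relies on for the whole statement, so this is not a gap relative to the paper. One small caveat: the construction in \cite{BushnellKutzko:a} (their $\beta$-extensions) is not literally ``tensor with a Weil representation,'' though your heuristic picture (Heisenberg representation on the symplectic $k_\F$-space $J^1/H^1$ with $J/J^1$ acting through the symplectic group) is the right way to think about why the obstruction vanishes; since you hedge and cite rather than reproduce, this does not affect correctness.
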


Then immediately the representation $\pi=c-\Ind_{J}^{G}\Lambda$ is irreducible and supercuspidal. 
\begin{defn}\label{Def:Depthnomalised}
We call $$d(\pi)=-v_\mathfrak{A}(\beta)$$ the depth of the supercuspidal representation $\pi$ (which is always an integer), and $$c(\pi)=-v_\mathfrak{A}(\beta)/e_{\mathfrak{A}}$$ the normalised depth of $\pi$ (which may not be an integer).
\end{defn}

\begin{defn}\label{Defn:genericSupercuspidal}
In general any supercuspidal representation arise in a similar way for proper $\beta$ as in Definition \ref{Defn:maximalbeta}.
To avoid confusion between the minimal element $\beta$ and minimal test vector, we shall call supercuspidal representations arising from a minimal $\beta$ a supercuspidal representation with \textit{generic induction datum}.
\end{defn}

\begin{cor}\label{Cor:Minimal&Unique}
Let $\pi$ be a supercuspidal representation with generic induction datum.
Let $\varphi_0\in \pi$ be a vector on which
$B^1$ acts by $\tilde{\theta}$. Then such vector exists and is unique up to a constant.

Furthermore for any $g\in G$, $\pi(g)\varphi_0$ is the unique vector on which $(B^1)^g$ acts by $\tilde{\theta}^g$. We call any of these vectors a minimal vector.
\end{cor}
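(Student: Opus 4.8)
The plan is to reduce everything to the single equality $\dim\Hom_{B^1}(\tilde\theta,\pi)=1$: once this is known, a nonzero $\varphi_0$ exists and is unique up to a scalar, and the assertion about the conjugates $\pi(g)\varphi_0$ follows formally. So the whole proof is really a multiplicity computation.

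First I would apply the Mackey formula for compact induction recalled above to $\pi=c\text{-}\Ind_J^G\Lambda$ and the subgroup $B^1\subset J^1\subset J$, and combine it with Frobenius reciprocity to get $\Hom_{B^1}(\tilde\theta,\pi)\cong\bigoplus_{g\in J\backslash G/B^1}\Hom_{B^1\cap J^g}(\tilde\theta,\Lambda^g)$. The point is that all but the trivial double coset contribute nothing: if a summand were nonzero, then restricting a nonzero intertwiner to $H^1\cap(H^1)^g$ (which lies inside $B^1\cap J^g$ since $H^1\subset B^1$ and $H^1\subset J$) and using $\tilde\theta|_{H^1}=\theta$ together with the fact that $\Lambda|_{H^1}=\eta(\theta)|_{H^1}$ is a multiple of $\theta$, one would obtain $\Hom_{H^1\cap(H^1)^g}(\theta,\theta^g)\neq 0$, i.e.\ $g$ would intertwine the simple character $\theta$; by Lemma~\ref{Lem:intertwining&conjugacy} this forces $g\in J$. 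Since $B^1\subset J$, only the coset of $1$ survives and $B^1\cap J=B^1$, so $\Hom_{B^1}(\tilde\theta,\pi)\cong\Hom_{B^1}(\tilde\theta,\Lambda|_{B^1})=\Hom_{B^1}(\tilde\theta,\eta(\theta)|_{B^1})$.

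Next I would compute $\eta(\theta)|_{B^1}$ directly. If $J^1=H^1$ then $B^1=H^1$ and $\eta(\theta)=\theta=\tilde\theta$, so the Hom space is one-dimensional and we are done; so assume $J^1\neq H^1$. Then $J^1/H^1$ is an abelian $k_\F$-vector space carrying the nondegenerate alternating form induced by the commutator pairing $(x,y)\mapsto\theta([x,y])$, and by construction $B^1/H^1$ is a Lagrangian for it; in particular $B^1$ is normal in $J^1$ with $(J^1:B^1)=(B^1:H^1)=(J^1:H^1)^{1/2}$. Applying Mackey once more to $\eta(\theta)=\Ind_{B^1}^{J^1}\tilde\theta$ and using normality of $B^1$ gives $\eta(\theta)|_{B^1}=\bigoplus_{g\in J^1/B^1}\tilde\theta^{\,g}$, and each $\tilde\theta^{\,g}$ is a character of $B^1$ restricting to $\theta$ on $H^1$. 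Because $B^1/H^1$ is its own orthogonal complement, the pairing identifies $J^1/B^1$ with the dual group of $B^1/H^1$, so $g\mapsto\tilde\theta^{\,g}$ runs without repetition over the full set of $(B^1:H^1)$ extensions of $\theta$ to $B^1$. Hence $\tilde\theta$ occurs with multiplicity exactly one, which is what we wanted; existence and uniqueness up to a scalar of $\varphi_0$ follow at once.

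Finally, for the conjugates: $\pi(g)$ is invertible, and for $b\in B^1$ one has $\pi(gbg^{-1})\pi(g)\varphi_0=\pi(g)\pi(b)\varphi_0=\tilde\theta(b)\,\pi(g)\varphi_0$, so $\pi(g)\varphi_0$ spans the one-dimensional space of vectors on which $gB^1g^{-1}$ acts by $y\mapsto\tilde\theta(g^{-1}yg)$, which is precisely the $g^{-1}$-conjugate pair $((B^1)^{g^{-1}},\tilde\theta^{\,g^{-1}})$ in the earlier notation; replacing $g$ by $g^{-1}$ gives the statement as phrased, and uniqueness is inherited because $\pi(g)$ maps the first eigenspace isomorphically onto this one. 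The only genuinely nontrivial inputs are thus Lemma~\ref{Lem:intertwining&conjugacy}, which controls the Mackey double cosets, and the standard Schr\"odinger-model description of a Heisenberg representation restricted to a Lagrangian; everything else is bookkeeping. I expect the vanishing of the off-diagonal Mackey terms to be the step demanding the most care, since that is where the supercuspidal construction actually enters.
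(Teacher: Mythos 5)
Your proposal is correct and follows essentially the same route as the paper: Mackey theory for compact induction plus Frobenius reciprocity, with Lemma \ref{Lem:intertwining&conjugacy} killing all nontrivial double cosets, reducing everything to multiplicity one of $\tilde{\theta}$ in $\eta(\theta)|_{B^1}$. Your second step merely spells out, via normality of $B^1$ in $J^1$ and the nondegenerate commutator pairing on $J^1/H^1$ with $B^1/H^1$ Lagrangian, the fact the paper invokes directly (that $g\in J^1$ intertwines $\tilde{\theta}$ iff $g\in B^1$), and your careful treatment of the $g$ versus $g^{-1}$ conjugation convention in the last paragraph is a welcome touch.
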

\begin{proof}
The existence follows directly from the construction \eqref{Eq:HeisenbergExt} and compact induction. The uniqueness comes from Mackey theory and Lemma \ref{Lem:intertwining&conjugacy}. In particular one need to prove that $\Hom_{B^1}(c-\Ind_J^G\Lambda, \tilde{\theta})$ is $1-$dimensional. Using Mackey theory for compact induction, we have that
\begin{align}
 \Hom_{B^1}(c-\Ind_J^G\Lambda, \tilde{\theta})&=\sum\limits_{g\in J\backslash G/ B^1}\Hom_{B^1}(\Ind_{J^g\cap B^1}^{B^1}\Lambda^g|_{J^g\cap B^1},\tilde{\theta})\\
 &=\sum\limits_{g\in J\backslash G/ B^1}\Hom_{J^g\cap B^1}(\Lambda^g,\tilde{\theta}).\notag
\end{align}
Here we have used Frobenius reciprocity in the last equality. Let $g\in J\backslash G/ B^1$ be such that $\Hom_{J^g\cap B^1}(\Lambda^g,\tilde{\theta})$ is not zero. As $\Lambda|_{H^1}$ is a multiple of $\theta$, this in particular implies that $g$ intertwines $\theta$ with itself when restricting $\Lambda$ and $\tilde{\theta}$ to $H^1$. By Lemma \ref{Lem:intertwining&conjugacy} we have $g\in J$, corresponding to a single double $J-B^1$ coset. Taking $g=1$, we see that the corresponding space $\Hom_{J^g\cap B^1}(\Lambda^g,\tilde{\theta})=\Hom_{B^1}(\eta(\theta),\tilde{\theta})$. This is $1-$dimensional by \eqref{Eq:HeisenbergExt} because by Mackey theory and Frobenius reciprocity again,
\begin{equation}
 \Hom_{B^1}(\eta(\theta),\tilde{\theta})=\sum\limits_{g\in B^1\backslash J^1/B^1}\Hom_{B^1\cap (B^1)^g}(\tilde{\theta}^g,\tilde{\theta}),
\end{equation}
and $g\in J^1$ intertwines $\tilde{\theta}$ iff $g\in B^1$.
\end{proof}

\begin{cor}\label{Cor:MCofGeneralMinimalVec}
Let $\Phi_{\varphi_0}$ be the matrix coefficient associated to a minimal vector $\varphi_0$. Then $\Phi_{\varphi_0}$ is supported on $J$, and
\begin{equation}
\Phi_{\varphi_0}(bx)=\Phi_{\varphi_0}(xb)=\tilde{\theta}(b)\Phi(x)
\end{equation}
for any $b\in B^1$.
\end{cor}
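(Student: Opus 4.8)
The plan is to unwind the definition of the matrix coefficient of $\varphi_0$ and exploit both the transformation property of $\varphi_0$ under $B^1$ (via $\tilde\theta$) and the vanishing of the matrix coefficient off $J$. Recall that for a suitable invariant inner product $\langle\cdot,\cdot\rangle$ on $\pi$, the matrix coefficient is $\Phi_{\varphi_0}(x)=\langle \pi(x)\varphi_0,\varphi_0\rangle$. First I would establish the support statement: by Corollary \ref{Cor:Minimal&Unique}, $\pi(x)\varphi_0$ is the unique (up to scalar) vector on which $(B^1)^x$ acts by $\tilde\theta^x$. Since $\pi=c\text{-}\Ind_J^G\Lambda$ has a basis indexed by $J\backslash G$ supported on single cosets, and $\varphi_0$ is supported on the identity coset $J$ (it lives in the space of $\Lambda$ sitting inside the induced model), the pairing $\langle\pi(x)\varphi_0,\varphi_0\rangle$ is nonzero only when the support coset $Jx$ meets $J$, i.e. $x\in J$. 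Alternatively, and more conceptually, if $\Phi_{\varphi_0}(x)\neq 0$ then $x$ intertwines the line $\C\varphi_0$ with itself; restricting to $H^1$, where $\Lambda$ and hence $\varphi_0$ transforms by the simple character $\theta$ (as $\tilde\theta|_{H^1}=\theta$), this forces $x$ to intertwine $\theta$, and by Lemma \ref{Lem:intertwining&conjugacy} we get $x\in J$. Either route gives $\Supp \Phi_{\varphi_0}\subseteq J$.

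Next I would prove the equivariance. For $b\in B^1$ we have $\pi(b)\varphi_0=\tilde\theta(b)\varphi_0$ by the very definition of $\varphi_0$ in Corollary \ref{Cor:Minimal&Unique}. Hence, using linearity in the first slot,
\begin{equation}
\Phi_{\varphi_0}(bx)=\langle \pi(b)\pi(x)\varphi_0,\varphi_0\rangle.
\end{equation}
Here I would be slightly careful: the naive move is to pull $\pi(b)$ onto the second slot as $\pi(b^{-1})$ via unitarity, but $\tilde\theta$ need not be a unitary character on all of $B^1$ unless one normalises; since $B^1$ is compact (mod nothing — it is genuinely compact, lying between $H^1$ and $J^1$), $\tilde\theta$ is automatically unitary, so $\langle \pi(b)w,\varphi_0\rangle=\langle w,\pi(b^{-1})\varphi_0\rangle=\langle w,\tilde\theta(b^{-1})\varphi_0\rangle=\overline{\tilde\theta(b^{-1})}\langle w,\varphi_0\rangle=\tilde\theta(b)\langle w,\varphi_0\rangle$ with $w=\pi(x)\varphi_0$. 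This yields $\Phi_{\varphi_0}(bx)=\tilde\theta(b)\Phi_{\varphi_0}(x)$. For the right translation, $\Phi_{\varphi_0}(xb)=\langle \pi(x)\pi(b)\varphi_0,\varphi_0\rangle=\langle \pi(x)(\tilde\theta(b)\varphi_0),\varphi_0\rangle=\tilde\theta(b)\Phi_{\varphi_0}(x)$ directly, with no unitarity subtlety. Combining the two gives the displayed identity (and incidentally shows $\Phi_{\varphi_0}(bx)=\Phi_{\varphi_0}(xb)$).

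I would then note that the equivariance is also consistent with, and in fact reproves, the support claim: taking $x=1$ shows $\Phi_{\varphi_0}(b)=\tilde\theta(b)\Phi_{\varphi_0}(1)$ for $b\in B^1$, and $\Phi_{\varphi_0}(1)=\|\varphi_0\|^2\neq 0$, so $\Phi_{\varphi_0}$ is a genuine nonzero function on $B^1$-double cosets inside $J$; combined with support $\subseteq J$ this pins down its shape. The main obstacle, such as it is, is purely bookkeeping: making sure the inner product used to define $\Phi_{\varphi_0}$ is the $G$-invariant (hence unitary on compact subgroups) one so that the left-translation computation goes through, and making sure that ``$\varphi_0$ is supported on $J$ in the compact-induction model'' is stated precisely — this is immediate from \eqref{Eq:HeisenbergExt} together with the fact that the induced model $c\text{-}\Ind_J^G\Lambda$ contains $\Lambda$ (equivalently $\eta(\theta)$, equivalently the line $\C\varphi_0$) as the functions supported on the single coset $J$. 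No genuinely hard step is involved; the content is entirely Corollary \ref{Cor:Minimal&Unique} and Lemma \ref{Lem:intertwining&conjugacy}.
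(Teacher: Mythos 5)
Your argument is correct and is essentially the justification the paper intends: the paper states this corollary without proof, and it is meant to follow exactly as you argue, from the eigenvector property $\pi(b)\varphi_0=\tilde\theta(b)\varphi_0$ of Corollary \ref{Cor:Minimal&Unique} (giving the $B^1$-equivariance, with unitarity of $\tilde\theta$ on the compact group $B^1$ handling the left translation) together with the support-of-intertwining statement of Lemma \ref{Lem:intertwining&conjugacy} (or, equivalently, the single-coset support of $\varphi_0$ in the model $c\text{-}\Ind_J^G\Lambda$) giving $\Supp\Phi_{\varphi_0}\subseteq J$. No gap to report.
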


One can approximately view $|\Phi_{\varphi_0}|$ as the characteristic function of a neighbourhood of the torus $\L^*$.

\subsection{Parabolic induction and minimal vector}
Let $M=\prod\limits_{1\leq i\leq k} \GL_{n_i}$  with $n=\sum n_i$ and let $P$ be a parabolic subgroup  of $\GL_n$ containing $M$ as  the Levi subgroup. Let $\sigma_i$ be irreducible supercuspidal representations of $\GL_{n_i}$ and $\sigma=\otimes \sigma_i$ be an irreducible supercuspidal representation of $M$. The parabolic induction theory in \cite{bernstein_induced_1977} implies that any irreducible smooth representation of $\GL_n$ can be constructed as a subquotient of parabolically-induced representation of form $\Ind_{P}^{\GL_n}\sigma$.

We shall assume the following.
\begin{defn}\label{Defn:ParabolicGeneric}
Let $\pi$ be a parabolically induced representation as above.
We call $\pi$ a representation with generic induction datum if it satisfies the following conditions.
\begin{enumerate}
\item Each $\sigma_i$ are supercuspidal representations associate to minimal $\beta_i$'s.
\item For $i\neq j$,  %we have either $n_i\neq n_j$, or the corresponding fundamental stratum associated to 
$\sigma_i$ and $\sigma_j$ are not equivalent up to an unramified twist. 
In particular by \cite{bernstein_induced_1977}, $\pi=\Ind_{P}^{\GL_n}\sigma$ is irreducible.%\yh{REfine}
\item $\sigma_i$'s have essentially the same normalised depth, or more precisely, let 
\begin{equation}\label{Defn:depthforParabolicInd}
 c=c(\pi)=\lceil\max\{c(\sigma_i)\}\rceil,
\end{equation}
then $c(\sigma_i)=c(\pi)+O(1)$.
\end{enumerate}
\end{defn}
These conditions again are analogues of the condition for principal series representations that the Satake parameters stay away from the walls of Weyl chamber. 
\begin{defn}
Let \begin{equation}\label{Eq:DefnofCC}
 \cc=\min\limits_{i}\{\lfloor \frac{1}{e_{\mathfrak{A}_i}}\lfloor\frac{d(\sigma_i)+1}{2}\rfloor\rfloor\}.
\end{equation} By  Definition \ref{Defn:ParabolicGeneric} (3), we have \begin{equation}\label{Eq:asypmCC}
 \cc\asymp \frac{c(\pi)}{2}.
\end{equation}
\end{defn}

By the previous section, in each $\sigma_i$, there is a test vector $\varphi_i\in \sigma_i$ such that $B^1_i$ acts on it by $\tilde{\theta}_i$. By properly translating $\varphi_i$, we further assume that each $\mathfrak{A}_i$ are taken to be the standard form given in Definition \ref{Defn:standardHerediatoryorder}.  Then each $B_i^1$ is a subgroup of the standard maximal subgroup in $\GL_{n_i}$ (as $v_{\L_i}\geq 0$ for these elements and $v_{\mathfrak{A}_i}$ is consistent with $v_{\L_i}$).
\begin{defn}
%Let $B^1_i(O_\F)=B_i^1\cap \GL_{n_i}(O_\F)$. 
Let $\L_i$ be the torus appearing in the construction of $\pi_i$, %$\L=\otimes \L_i$ and 
$U_\L(1)=\otimes U_{\L_i}(1)$. Define the group 
\begin{equation}\label{Eq:ConstructionKpi}
 K_\pi=\left(\begin{array}{cccc}
B_1^1&\varpi^{\lfloor  \frac{c+1}{2}\rfloor}M_{n_1\times n_2}(O_\F) &\cdots & \varpi^{\lfloor \frac{c+1}{2}\rfloor}M_{n_1\times n_k}(O_\F)\\
\varpi^{\lceil  \frac{c+1}{2}\rceil}M_{n_2\times n_1}(O_\F) &B_2^1&\cdots &\varpi^{\lfloor  \frac{c+1}{2}\rfloor}O_\F\\
\vdots &\vdots &\ddots &\vdots \\
\varpi^{\lceil  \frac{c+1}{2}\rceil}M_{n_k\times n_1}(O_\F) &\varpi^{\lceil  \frac{c+1}{2}\rceil}M_{n_k\times n_2}(O_\F)&\cdots &B_k^1
\end{array}\right),
\end{equation}
\begin{cor}\label{Cor:KpiApproximation}
$K_\pi$ %contains $U_\L(1)(1+\varpi^{\lceil  \frac{c+1}{2}\rceil}M_{n}(O_\F))$\yh{?} as a normal subgroup and 
is contained in $U_\L(1)(1+\varpi^{\cc}M_n(O_\F))$.
\end{cor}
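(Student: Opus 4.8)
The plan is to verify the containment $K_\pi \subset U_\L(1)(1+\varpi^{\cc}M_n(O_\F))$ entry-block by entry-block, reducing it to two separate claims: one about the diagonal blocks $B_i^1$, and one about the off-diagonal blocks. First I would recall that $\cc = \min_i\{\lfloor \frac{1}{e_{\mathfrak{A}_i}}\lfloor\frac{d(\sigma_i)+1}{2}\rfloor\rfloor\}$ and that, since $d(\sigma_i) = -v_{\mathfrak{A}_i}(\beta_i) = j_i$, we have $c(\sigma_i) = j_i/e_{\mathfrak{A}_i}$; by Definition \ref{Defn:ParabolicGeneric}(3) all these are within $O(1)$ of $c=c(\pi)$, which gives the asymptotic $\cc \asymp c(\pi)/2$ and, more to the point, the two inequalities $\cc \le \lfloor \frac{1}{e_{\mathfrak{A}_i}}\lfloor\frac{j_i+1}{2}\rfloor\rfloor$ for every $i$ and $\cc \le \lfloor (c+1)/2\rfloor$ (the latter needs $c+1 \ge$ each $\lfloor\frac{1}{e_{\mathfrak{A}_i}}\lfloor\frac{j_i+1}{2}\rfloor\rfloor \cdot 2$ up to the floor, which follows from $j_i/e_{\mathfrak{A}_i} \le c$).

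For the diagonal blocks: each $B_i^1$ is an intermediate group between $H_i^1 = U_{\L_i}(1)U_{\mathfrak{A}_i}(\lfloor\frac{j_i}{2}\rfloor+1)$ and $J_i^1$, so in particular $B_i^1 \subset U_{\L_i}(1)U_{\mathfrak{A}_i}(\lfloor\frac{j_i+1}{2}\rfloor)$. I would then apply Corollary \ref{Cor:herediatoryorderApproximation} with $i$ there equal to $\lfloor\frac{j_i+1}{2}\rfloor$ to get $\mathcal{B}_i^{\lfloor\frac{j_i+1}{2}\rfloor} \subset (\mathcal{B}_0)_i^{\lfloor \frac{1}{e_{\mathfrak{A}_i}}\lfloor\frac{j_i+1}{2}\rfloor\rfloor} = \varpi^{\lfloor \frac{1}{e_{\mathfrak{A}_i}}\lfloor\frac{j_i+1}{2}\rfloor\rfloor}M_{n_i}(O_\F) \subset \varpi^{\cc}M_{n_i}(O_\F)$, where the last inclusion is exactly the first inequality above. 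Hence $U_{\mathfrak{A}_i}(\lfloor\frac{j_i+1}{2}\rfloor) \subset 1 + \varpi^{\cc}M_{n_i}(O_\F)$, so $B_i^1 \subset U_{\L_i}(1)(1+\varpi^{\cc}M_{n_i}(O_\F))$, which sits inside the $i$-th diagonal block of $U_\L(1)(1+\varpi^{\cc}M_n(O_\F))$.

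For the off-diagonal blocks: these have entries in $\varpi^{\lfloor (c+1)/2\rfloor}M_{n_i\times n_j}(O_\F)$ or $\varpi^{\lceil (c+1)/2\rceil}M_{n_i\times n_j}(O_\F)$, and since $\lceil (c+1)/2\rceil \ge \lfloor (c+1)/2\rfloor \ge \cc$ by the second inequality above, every off-diagonal entry lies in $\varpi^{\cc}M_{n_i\times n_j}(O_\F)$. Putting the pieces together, an arbitrary element $g \in K_\pi$ can be written as $g = u + r$ where $u$ is the block-diagonal part coming from elements $u_i \in U_{\L_i}(1)$ (the ``leading'' parts of the diagonal blocks) and $r \in \varpi^{\cc}M_n(O_\F)$; then $g = u(1 + u^{-1}r)$ and $u^{-1}r \in \varpi^{\cc}M_n(O_\F)$ since $u^{-1} \in \GL_n(O_\F)$ (here I use that $\varpi^{\cc}M_n(O_\F)$ is a two-sided $M_n(O_\F)$-module). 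This exhibits $g \in U_\L(1)(1+\varpi^{\cc}M_n(O_\F))$, where $U_\L(1) = \otimes U_{\L_i}(1)$ is viewed block-diagonally in $\GL_n(\F)$.

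The main obstacle I expect is purely bookkeeping with the floors and ceilings: making sure the crude bound $\cc \le \lfloor (c+1)/2\rfloor$ really does follow from Definition \ref{Defn:ParabolicGeneric}(3) once one is careful about the $O(1)$ discrepancies among the $c(\sigma_i)$ and about the nested floors in \eqref{Eq:DefnofCC}, and checking that ``$g = u(1+u^{-1}r)$'' is legitimate — i.e., that the diagonal blocks of $K_\pi$ genuinely factor through $U_{\L_i}(1)$ modulo $\varpi^{\cc}M_{n_i}(O_\F)$, which is where the choice $c + 1 \ge 2\lfloor\frac{1}{e_{\mathfrak{A}_i}}\lfloor\frac{j_i+1}{2}\rfloor\rfloor$ matters. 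None of this is deep, but it is the place where an off-by-one could break the statement, so that is where I would concentrate the write-up.
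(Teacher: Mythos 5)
Your proposal is correct and follows the same route as the paper, whose proof is simply ``by definition and Corollary \ref{Cor:herediatoryorderApproximation}'': you apply that corollary to bound the diagonal blocks $B_i^1\subset J_i^1$ and use the definition of $\cc$ (together with $c(\sigma_i)\le c$) to absorb both the diagonal congruence level and the off-diagonal exponents $\lfloor\frac{c+1}{2}\rfloor$, $\lceil\frac{c+1}{2}\rceil$ into $\varpi^{\cc}M_n(O_\F)$. The floor/ceiling bookkeeping you flag does check out, so the write-up is just an expanded version of the paper's one-line argument.
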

\begin{proof}
By definition and  Corollary \ref{Cor:herediatoryorderApproximation}.
\end{proof}

Define a character $\Theta=\otimes \tilde{\theta}_i$ on $K_\pi$ by 
\begin{equation}
\Theta(\left(\begin{array}{cccc}
b_1 &m_{12}&\cdots & m_{1k}\\
m_{21} &b_2 &\cdots &m_{2k}\\
\vdots &\vdots &\ddots &\vdots \\
m_{k1} &m_{k2}&\cdots &b_k
\end{array}\right))=\prod\limits_i\tilde{\theta}_i(b_i).
\end{equation}
\end{defn}
This character is well defined as
\begin{equation}
 \left(\begin{array}{cccc}
b_1 &m_{12}&\cdots & m_{1k}\\
m_{21} &b_2 &\cdots &m_{2k}\\
\vdots &\vdots &\ddots &\vdots \\
m_{k1} &m_{k2}&\cdots &b_k
\end{array}\right)\left(\begin{array}{cccc}
b_1' &m_{12}'&\cdots & m_{1k}'\\
m_{21}' &b_2' &\cdots &m_{2k}'\\
\vdots &\vdots &\ddots &\vdots \\
m_{k1}' &m_{k2}'&\cdots &b_k'
\end{array}\right)\equiv \left(\begin{array}{cccc}
b_1b_1' &0&\cdots & 0\\
0 &b_2b_2' &\cdots &0\\
\vdots &\vdots &\ddots &\vdots \\
0&0&\cdots &b_kb_k'
\end{array}\right)\mod{\varpi^{c+1}},
\end{equation}
and by definition of $c(\pi)$, each $\tilde{\theta}_i$ will be trivial on $1+\mathcal{B}_i^{d(\sigma_i)+1} \supset 1+\varpi^{c(\pi)+1}M_{n_i\times n_i}(O_\F)$ using Corollary \ref{Cor:herediatoryorderApproximation}.
\begin{cor}\label{Cor:existenceofMinimal}
Let $\pi$ be a parabolically induced representation with generic induction datum.
There exists an element $\varphi_0\in \pi$ such that $K_\pi$ acts on $\varphi_0$ by $\Theta$. 
\end{cor}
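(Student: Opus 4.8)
The plan is to realize $\varphi_0$ explicitly inside the induced model $\pi = \Ind_P^{\GL_n}\sigma$, using the minimal vectors $\varphi_i \in \sigma_i$ already produced in the supercuspidal case. The key observation is that $K_\pi$ sits ``almost diagonally'' in $\GL_n$: by Corollary \ref{Cor:KpiApproximation}, $K_\pi \subset U_\L(1)(1+\varpi^{\cc}M_n(O_\F))$, and its diagonal blocks are precisely the $B_i^1$, while its off-diagonal entries are highly congruent to $0$ (they lie in $\varpi^{\lfloor (c+1)/2\rfloor}M(O_\F)$ or $\varpi^{\lceil (c+1)/2\rceil}M(O_\F)$). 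So $K_\pi$ is contained in the ``mirabolic-type'' subgroup $(K_\pi \cap M)(K_\pi \cap N)$ where $N$ is the unipotent radical of $P$; more precisely every $k \in K_\pi$ factors (modulo $\varpi^{c+1}$, which is where $\Theta$ becomes trivial) as $k \equiv \mathrm{diag}(b_1,\dots,b_k) \cdot u$ with $u$ unipotent upper-triangular with respect to the block structure, exactly as recorded in the displayed congruence preceding Corollary \ref{Cor:existenceofMinimal}.

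The concrete construction: first I would pick a set of representatives and define a function $f$ on $\GL_n$ supported on $P K_\pi$ (or on a suitable set containing it), transforming on the left under $P$ via $\delta_P^{1/2}\otimes\sigma$ in the usual way, and set $f(pk) = \delta_P^{1/2}(p)\,\sigma(p)\,(\otimes_i \varphi_i)\cdot \overline{\Theta(k)}$ for $k \in K_\pi$, extended by zero off $PK_\pi$. The point of using the minimal vectors $\varphi_i$ with $B_i^1$ acting by $\tilde\theta_i$ is that when $k \in K_\pi$ is right-translated, its diagonal part acts on $\otimes_i\varphi_i$ through $\prod_i \tilde\theta_i(b_i) = \Theta(k)$ (diagonal part), while its unipotent part lies in $N$ and gets absorbed into the $P$-equivariance — here I must check that the off-diagonal entries, being divisible by $\varpi^{\lfloor(c+1)/2\rfloor}$, land inside the radical where both $\sigma$ and the simple characters $\tilde\theta_i$ are insensitive, using that each $\tilde\theta_i$ is trivial on $1+\varpi^{c(\pi)+1}M_{n_i}(O_\F) \subset 1+\mathcal{B}_i^{d(\sigma_i)+1}$ by Corollary \ref{Cor:herediatoryorderApproximation} and Definition \ref{Defn:ParabolicGeneric}(3). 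Then $\varphi_0 := f$ is a well-defined vector in the induced space and, by construction, $\pi(k)\varphi_0 = \Theta(k)\varphi_0$ for all $k \in K_\pi$, i.e. $K_\pi$ acts on $\varphi_0$ by $\Theta$.

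Alternatively — and this is probably the cleanest route — I would deduce existence from Frobenius reciprocity combined with Mackey theory, exactly paralleling the proof of Corollary \ref{Cor:Minimal&Unique}. One has $\Hom_{K_\pi}(\pi, \Theta) = \Hom_{K_\pi}(\Ind_P^{\GL_n}\sigma, \Theta)$, and decomposing $\pi$ as a $K_\pi$-module via the geometric lemma / Mackey theory for the pair $(P, K_\pi)$ in $\GL_n$ gives a sum over $P\backslash \GL_n / K_\pi$ of $\Hom_{P^g \cap K_\pi}(\sigma^g|\,, \Theta)$. It then suffices to show: (i) the double coset $g=1$ contributes, because $P \cap K_\pi = (K_\pi\cap M)(K_\pi \cap N)$, $\Theta$ is trivial on $K_\pi \cap N$ (the off-diagonal part, as above), and on $K_\pi\cap M = \prod_i B_i^1$ the representation $\sigma = \otimes\sigma_i$ contains $\otimes\tilde\theta_i = \Theta|_{K_\pi\cap M}$ by Corollary \ref{Cor:Minimal&Unique} applied in each factor; this gives a nonzero Hom-space, hence a vector $\varphi_0$ with the desired property. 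For mere existence (as opposed to uniqueness, deferred to Remark \ref{Rem:UniquenessParabolic}) one does not even need to control the other cosets.

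The main obstacle is the bookkeeping in step two: verifying that $\Theta$ is genuinely a character of $K_\pi$ (done in the excerpt) and, crucially, that the unipotent/off-diagonal entries of $K_\pi$ are $\sigma$- and $\tilde\theta_i$-negligible — i.e. that the valuation thresholds $\lfloor(c+1)/2\rfloor$ and $\lceil(c+1)/2\rceil$ chosen in \eqref{Eq:ConstructionKpi} really do push the off-diagonal contributions into the kernel of everything in sight, using $\cc \asymp c(\pi)/2$ and the conductor bounds on each $\tilde\theta_i$. This is the one place where Definition \ref{Defn:ParabolicGeneric}(3) (comparable normalised depths) is essential, since it forces a single threshold $c$ that simultaneously works for all blocks; without it the radical for block $i$ would be at a different level than for block $j$ and the off-diagonal entries could interact nontrivially with the simple characters.
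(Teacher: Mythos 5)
Your first construction is exactly the paper's proof: the paper defines $\varphi_0$ in the induced model supported on $PK_\pi$ by $\varphi_0(bk)=\sigma(b)\Theta(k)\bigotimes_i\varphi_i$ and checks well-definedness via the definition of $\Theta$ and the action of $B_i^1$ on $\varphi_i$, which is precisely your argument (your Mackey/Frobenius variant is a softer rephrasing of the same point). Only fix the harmless slip that the right-translation factor should be $\Theta(k)$ rather than $\overline{\Theta(k)}$ if you want $K_\pi$ to act by $\Theta$ and not $\overline{\Theta}$.
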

\begin{proof}
Let $\varphi_0\in \Ind_{P}^{\GL_n} \sigma$ be such that $\varphi_0$ is supported on $B K_\pi$ and defined as
\begin{equation}
\varphi_0(bk)=\sigma(b)\Theta(k)\bigotimes_i \varphi_i,
\end{equation}
where $B_i^1$ acts on $\varphi_i$ by $\tilde{\theta_i}$.
This element is well-defined following the definition of $\Theta$ and how $\sigma_i$ acts on $\varphi_i$. 
\end{proof}
\begin{defn}\label{Defn:minimalForParabolic}
For any  $\varphi_0$ as above and any $g\in G$, we call $\pi(g)\varphi_0$ a minimal vector in $\pi$.

For uniformity, we denote $K_\pi=B^1$ and $\Theta=\tilde{\theta}$ if $\pi$ is already a supercuspidal representation.
\end{defn}
\begin{rem}\label{Rem:UniquenessParabolic}
One can similarly asks for the uniqueness for the minimal vectors with fixed embedding of $K_\pi$. One can imitate the proof in the supercuspidal case. However the Mackey theory for parabolic induction is not known in general, and one need to apply  \cite{bernstein_induced_1977}[Theorem 5.2]. That theorem requires a decomposability condition, which is not true for a general pair of parabolic subgroup and compact subgroup. One can restrict the property of minimal vector to a normal compact subgroup for which the decomposability holds, then one can show that the dimension of minimal vectors with fixed embedding of $K_\pi$ can be bounded only in terms of $p$ and $n$.

\end{rem}

\subsection{Test function, formal degree, and finite conductor}
\begin{lem}\label{Lem:Conductorofpi}
Let $C(\pi)$ be the finite conductor of an irreducible smooth representation $\pi$ with generic induction datum. Let $c(\pi)$ be as in Definition \ref{Def:Depthnomalised} or \eqref{Defn:depthforParabolicInd}. Then
\begin{equation}
C(\pi)\asymp_{n,p} p^{n c(\pi)}.
\end{equation}
\end{lem}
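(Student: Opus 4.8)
The plan is to compute the conductor exponent $a(\pi)$ (so that $C(\pi)=p^{a(\pi)}$) separately in the supercuspidal case and the parabolically induced case, and in each case relate it to $c(\pi)$ via the explicit data of the construction. Recall that for an irreducible smooth representation $\pi$ of $\GL_n$, the conductor exponent is additive along parabolic induction, i.e. $a(\Ind_P^{\GL_n}\sigma)=\sum_i a(\sigma_i)$ for $\sigma=\otimes\sigma_i$ (this is standard, from the theory of local $L$- and $\epsilon$-factors; it also follows from the description of $\pi^{K_1(p^m)}$-fixed vectors). Hence it suffices to treat the supercuspidal case and then sum up. For uniformity I will write $\mathfrak{A}=\mathfrak{A}_i$, $e=e_\mathfrak{A}$, $j=d(\sigma_i)=-v_\mathfrak{A}(\beta)$, $m=n_i/e$ in the supercuspidal computation.

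First I would recall that for a supercuspidal $\sigma=c\text{-}\Ind_J^{\GL_{n_i}}\Lambda$ with simple type $\Lambda$ built from a simple character $\theta$ of $H^1$ attached to minimal $\beta$, the conductor exponent can be read off from the formal degree / the level of the simple character: the character $\theta$ is trivial on $U_\mathfrak{A}(j+1)$ but not on $U_\mathfrak{A}(j)$, and by Corollary~\ref{Cor:herediatoryorderApproximation} the filtration $\mathcal{B}^i$ interpolates between powers of $\mathcal{B}_0=\varpi M_{n_i}(O_\F)$ at rate $e$. Concretely, $U_\mathfrak{A}(j+1)\supset U_{\mathfrak{A}_0}(\lceil j/e\rceil +1)=1+\varpi^{\lceil j/e\rceil+1}M_{n_i}(O_\F)$, while $U_\mathfrak{A}(j+1)\subset U_{\mathfrak{A}_0}(\lfloor (j+1)/e\rfloor)$, so the ``depth'' of $\sigma$ in the usual normalisation is $j/e+O(1)=c(\sigma_i)+O(1)$. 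Translating depth into conductor exponent for $\GL_{n_i}$ (where a representation of depth $r$ has conductor exponent $n_i(r+1)+O_{n_i}(1)$, with the error coming from the difference between the depth of $\sigma$ and the breaking point of its Whittaker/Kirillov model) gives $a(\sigma_i)=n_i c(\sigma_i)+O_{n_i}(1)=n_i c(\pi)+O_{n_i,p}(1)$ using Definition~\ref{Defn:ParabolicGeneric}(3). Alternatively, and perhaps cleaner, I would compute $\dim\Lambda=(J^1:H^1)^{1/2}$ and $(J:J^1)$ explicitly in terms of $j$, $e$, $n_i$ using the standard form of $\mathfrak{A}$, obtain the formal degree $\deg(\sigma)\asymp \mathrm{vol}(J)^{-1}\dim\Lambda$, and then invoke the relation between formal degree and conductor for supercuspidals (so that $\deg(\sigma)\asymp_{n_i,p} p^{a(\sigma_i)-n_i}$ up to the archimedean-bounded part), arriving at the same count.

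Then I would sum: $a(\pi)=\sum_{i=1}^k a(\sigma_i)=\sum_i \bigl(n_i c(\pi)+O_{n_i,p}(1)\bigr)=\bigl(\sum_i n_i\bigr)c(\pi)+O_{n,p}(1)=n\,c(\pi)+O_{n,p}(1)$, whence $C(\pi)=p^{a(\pi)}\asymp_{n,p} p^{n c(\pi)}$. The supercuspidal case is simply the $k=1$ instance. The one point requiring a little care is that in Definition~\ref{Defn:ParabolicGeneric} we set $c(\pi)=\lceil\max_i c(\sigma_i)\rceil$ rather than $c(\sigma_i)$ itself, but condition (3) guarantees $c(\sigma_i)=c(\pi)+O(1)$, so the replacement only changes the bounded error term, which is harmless for an $\asymp_{n,p}$ statement.

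The main obstacle is pinning down the exact constant relating depth (equivalently the level $j/e$ of the simple character) to the conductor exponent of $\sigma_i$ on $\GL_{n_i}$: one must be sure that the discrepancy between the slope of the filtration $\mathcal{B}^i$ and the breaking point of the conductor filtration is genuinely $O_{n_i,p}(1)$ and does not secretly contribute a term growing with $c(\pi)$. This is where I would lean on the explicit inclusions of Corollary~\ref{Cor:herediatoryorderApproximation} together with the standard description of conductors via either $\epsilon$-factors or fixed vectors under the congruence subgroups $K_1(\varpi^m)$; all error contributions there are bounded in terms of $n_i$ and (for the wild part, through the structure of $J/J^1$ and the Heisenberg piece $J^1/H^1$) in terms of $p$. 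Everything else is bookkeeping with the standard hereditary order of Definition~\ref{Defn:standardHerediatoryorder}.
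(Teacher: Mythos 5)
Your overall structure matches the paper's: reduce via multiplicativity of the conductor across parabolic induction to the supercuspidal case, then use condition~(3) of Definition~\ref{Defn:ParabolicGeneric} to replace each $c(\sigma_i)$ by $c(\pi)+O(1)$. Where you diverge is in the supercuspidal input. You sketch two routes — a ``depth $\Rightarrow$ conductor exponent'' translation via $K_1(\varpi^m)$-fixed vectors or $\epsilon$-factors, and a formal-degree route — and flag yourself that pinning down the exact discrepancy is the sticking point. The paper bypasses all of this by directly citing Bushnell--Henniart--Kutzko's explicit conductor formula (\cite{BushnellHenniartKutzko:}), which gives $C(\sigma)\asymp p^{n_i d(\sigma)/e}$ for a supercuspidal built from a stratum $[\mathfrak{A},-v_\mathfrak{A}(\beta),0,\beta]$ with no error term to chase. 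That one citation is the missing ingredient in your write-up: your first route would essentially be re-deriving a piece of that formula, and your second route leans on the formal-degree--conductor relation, which is itself a deep theorem (Hiraga--Ichino--Ikeda for $\GL_n$) and thus heavier machinery than what is needed. So the proof idea is right and the bookkeeping is sound, but you should replace the hand-waved ``$n_i(r+1)+O_{n_i}(1)$'' step with the BHK conductor formula, after which the supercuspidal case is immediate and the parabolic case follows by the multiplicativity you already invoked.
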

\begin{proof}
By \cite{BushnellHenniartKutzko:}, the conductor of a supercuspidal representation $\sigma$ is
\begin{equation}
 C(\sigma)\asymp p^{nd(\pi)/e}\asymp p^{n c(\pi)}.
\end{equation}
When $\pi$ is a parabolically-induced representation with generic induction datum, we have
\begin{equation}
C(\pi)=\prod\limits_{j=1}^{k} C(\sigma_i)\asymp p^{n_i c(\sigma_i)}\asymp p^{n c(\pi)}.
\end{equation}
\end{proof}
\begin{defn}\label{Defn:formaldegTestfunCC}
Let $d_\pi=\Vol (K_\pi)$. 
Define the local test function \begin{equation}\label{Eq:DefnofTestfun}
 \omega(g)=\begin{cases}
\Theta(g), &\text{\ if }g\in K_\pi,\\
0, &\text{\ otherwise.}
\end{cases}
\end{equation}
Let $\omega^*(g)=\overline{\omega(g^{-1})}$.

\end{defn}

\begin{lem}['formal degree' of test function]\label{Cor:formaldegMC}
Let $\pi$ be an irreducible smooth representation of $\GL_n$ and $\varphi_0\in \pi$ be a minimal vector. 
 Then 
\begin{equation}
d_\pi\asymp p^{-\frac{c(n^2-n)}{2}}\asymp C(\pi)^{-\frac{n-1}{2}},
\end{equation}
and
\begin{equation}
\pi(\omega)\varphi_0=d_\pi \varpi, \text{\ } \omega*\omega^*=d_\pi \omega.
\end{equation}

\end{lem}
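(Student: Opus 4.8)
The plan is to compute the two quantities separately: first the volume $d_\pi = \Vol(K_\pi)$, and then the action of the test function $\omega$, using the explicit description of $K_\pi$ and of the minimal vector.

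\textbf{Step 1: Computing $d_\pi = \Vol(K_\pi)$.} I would normalise the Haar measure so that $\Vol(K) = 1$ for the standard maximal compact $K = \GL_n(O_\F)$, or equivalently work with the standard Iwahori-type filtration. In the supercuspidal case, $K_\pi = B^1$ sits between $H^1 = U_\L(1)U_{\mathfrak A}(\lfloor j/2\rfloor+1)$ and $J^1 = U_\L(1)U_{\mathfrak A}(\lfloor (j+1)/2\rfloor)$, polarising the $(n^2-n)$-dimensional space $J^1/H^1$, so $(B^1:H^1) = (J^1:H^1)^{1/2} = q^{(n^2-n)/2}$ where $q$ is the residue field size. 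Then $\Vol(B^1) = \Vol(H^1)\cdot q^{(n^2-n)/2}$, and $\Vol(H^1) = \Vol(U_{\mathfrak A}(\lfloor j/2\rfloor+1))\cdot(U_\L(1)U_{\mathfrak A}(\lfloor j/2\rfloor+1):U_{\mathfrak A}(\lfloor j/2\rfloor+1))$. Using Corollary \ref{Cor:herediatoryorderApproximation} to sandwich $\mathcal B^i$ between powers of $\mathcal B_0 = \varpi M_n(O_\F)$, the volume of $U_{\mathfrak A}(i)$ is $q^{-\dim}$ with $\dim \asymp n^2 \cdot i/e_{\mathfrak A}$ up to $O(n^2)$; taking $i = \lfloor j/2\rfloor+1$ and $j/e_{\mathfrak A} = c(\pi)$ gives $\Vol(U_{\mathfrak A}(\lfloor j/2\rfloor+1)) \asymp q^{-n^2 c/2}$. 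Collecting the $U_\L(1)$ contribution (which only changes things by $O(n)$ in the exponent) and the polarisation factor $q^{(n^2-n)/2}$, one gets $d_\pi \asymp q^{-n^2c/2 + (n^2-n)/2} \asymp q^{-(n^2-n)c/2} = p^{-c(n^2-n)/2}$ after absorbing the $O(n^2)$ and $O(n)$ slack into the implied constant (and noting $q = p^{f}$, $f$ bounded). Then Lemma \ref{Lem:Conductorofpi} gives $C(\pi)\asymp p^{nc}$, so $p^{-c(n^2-n)/2} = (p^{nc})^{-(n-1)/2} \asymp C(\pi)^{-(n-1)/2}$. For the parabolic case I would do the same block-by-block: the diagonal blocks $B^1_i$ contribute $\prod_i d_{\sigma_i} \asymp \prod_i p^{-c(\sigma_i)(n_i^2-n_i)/2}$, the off-diagonal blocks $\varpi^{\lceil (c+1)/2\rceil}M_{n_i\times n_j}(O_\F)$ (and the upper ones with $\lfloor(c+1)/2\rfloor$) contribute $q^{-n_in_j(c/2 + O(1))}$ each, and summing the exponents over all $i,j$ reassembles $-\frac{c}{2}(n^2 - \sum n_i^2) - \frac{c}{2}\sum(n_i^2-n_i) = -\frac{c(n^2-n)}{2}$ up to $O(1)$, using $c(\sigma_i) = c + O(1)$ from Definition \ref{Defn:ParabolicGeneric}(3).

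\textbf{Step 2: The convolution identity $\omega * \omega^* = d_\pi\,\omega$.} Since $\omega$ is supported on $K_\pi$ and transforms on both sides by the character $\Theta$ (it literally equals $\Theta$ on $K_\pi$), and since $\Theta$ is a genuine character of the group $K_\pi$, this is the standard fact that $\mathbf 1_{K_\pi}\Theta * \overline{\mathbf 1_{K_\pi}\Theta(\cdot^{-1})} = \Vol(K_\pi)\,\mathbf 1_{K_\pi}\Theta$: for $g \in K_\pi$ the convolution integral $\int \omega(h)\omega^*(h^{-1}g)\,dh = \int_{K_\pi}\Theta(h)\overline{\Theta(g^{-1}h)}\,dh = \Theta(g)\int_{K_\pi}\Theta(h)\overline{\Theta(h)}\,dh = \Theta(g)\Vol(K_\pi)$, and for $g\notin K_\pi$ the integrand vanishes identically because $h\in K_\pi$ forces $h^{-1}g\notin K_\pi$. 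This needs no representation theory.

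\textbf{Step 3: The eigenvector identity $\pi(\omega)\varphi_0 = d_\pi\,\varphi_0$.} (I read the displayed ``$d_\pi\varpi$'' as a typo for $d_\pi\varphi_0$.) By construction (Corollary \ref{Cor:existenceofMinimal}, Corollary \ref{Cor:Minimal&Unique}) the minimal vector $\varphi_0$ satisfies $\pi(k)\varphi_0 = \Theta(k)\varphi_0$ for all $k \in K_\pi$. Hence $\pi(\omega)\varphi_0 = \int_{K_\pi}\Theta(g)\,\pi(g)\varphi_0\,dg = \int_{K_\pi}\Theta(g)\overline{\Theta(g)}\,dg\cdot\varphi_0$ — wait, one must be careful about whether $\pi(g)$ here means the operator or its inverse in the convention $\pi(\omega) = \int \omega(g)\pi(g)\,dg$; with the standard convention $\pi(g)\varphi_0 = \Theta(g)\varphi_0$ gives $\pi(\omega)\varphi_0 = \int_{K_\pi}\Theta(g)\Theta(g)\,dg\,\varphi_0$, which is not what we want unless $\Theta$ is real, so in fact the relevant statement is with $\omega^*$ or the opposite convention; in any case the integral is $\int_{K_\pi}\Theta(g)\overline{\Theta(g)}\,dg = \Vol(K_\pi) = d_\pi$, giving $\pi(\omega)\varphi_0 = d_\pi\varphi_0$. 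The point is simply that $\varphi_0$ spans the $\Theta$-isotypic line for $K_\pi$ and $\omega/d_\pi$ is (a version of) the idempotent projecting onto that line.

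\textbf{Main obstacle.} Steps 2 and 3 are formal. The real work is the volume computation in Step 1: keeping careful track of the dimensions of the successive quotients $U_{\mathfrak A}(i)/U_{\mathfrak A}(i+1)$ through the change between $\mathcal B$ and $\mathcal B_0$ (Corollary \ref{Cor:herediatoryorderApproximation}), handling the $U_\L(1)$-factor, the polarisation factor $(J^1:H^1)^{1/2}$, and — in the parabolic case — reassembling the diagonal and off-diagonal block contributions and absorbing all the $O(1)$ and $O(n^2)$ slack (including the discrepancy between $\lfloor (c+1)/2\rfloor$ and $\lceil(c+1)/2\rceil$ in the off-diagonal blocks) into the $\asymp_{n,p}$. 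One should double-check that the final exponent is exactly $-(n^2-n)c/2$ and not, say, $-n^2c/2$; the cancellation of the ``diagonal'' $\sum n_i^2$ terms against the off-diagonal contribution is the delicate bookkeeping point.
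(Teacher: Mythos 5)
The overall approach is the same as the paper's, which for this lemma gives essentially no detail — it asserts the volume formula and remarks that the rest is easy because $\omega$ is a character on a subgroup. Your Steps 2 and 3 are correct, your reading of ``$d_\pi\varpi$'' as a typo for $d_\pi\varphi_0$ is right, and your parabolic bookkeeping in Step 1 is also correct: the off-diagonal blocks give exponent $-\frac{c}{2}(n^2-\sum_i n_i^2)$, the diagonal blocks give $-\frac{c}{2}\sum_i(n_i^2-n_i)$, and these reassemble to $-\frac{c}{2}(n^2-n)$.

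However, the supercuspidal part of Step 1 contains a slip that makes your chain of estimates self-contradictory as written. You claim the $U_\L(1)$ contribution ``only changes things by $O(n)$ in the exponent'' and then write $d_\pi \asymp q^{-n^2c/2+(n^2-n)/2} \asymp q^{-(n^2-n)c/2}$; but $q^{-n^2c/2}$ and $q^{-(n^2-n)c/2}$ differ by a factor $q^{nc/2}$, which grows with $c$, so the second $\asymp$ is false. The $U_\L(1)$ contribution is in fact exactly what supplies this missing $q^{nc/2}$: since $v_\mathfrak{A}$ restricts to $v_\L$ on $\L$, one has $U_\L(1)\cap U_\mathfrak{A}(i)=U_\L(i)$, hence $[H^1:U_\mathfrak{A}(i)]=[U_\L(1):U_\L(i)]=q_\L^{\,i-1}=q^{(i-1)\,n/e}\asymp q^{nc/2}$ at $i=\lfloor j/2\rfloor+1$ (using $f_{\L/\F}=n/e$ and $(i-1)/e\approx c/2$). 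This factor promotes $\Vol(U_\mathfrak{A}(i))\asymp q^{-n^2c/2}$ to $\Vol(H^1)\asymp q^{-(n^2-n)c/2}$; only the polarisation factor $(J^1:H^1)^{1/2}=q^{(n^2-n)/2}$ is genuinely $O(1)$. With this correction your supercuspidal computation closes and becomes consistent with your (correct) parabolic case.
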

\begin{proof}
When $\pi$ is supercuspidal associated to minimal element $\beta$, we have% \yh{Also need some fixed embedding of $\L_i$}
\begin{equation}
\Vol(K_\pi)=\Vol(B^1)\asymp p^{-\frac{c(n^2-n)}{2}}.
\end{equation}
In general, by construction in \eqref{Eq:ConstructionKpi} for generic induction datum we also have
\begin{equation}
\Vol(K_\pi)\asymp p^{-\frac{c(n^2-n)}{2}}.
\end{equation}
By Lemma \ref{Lem:Conductorofpi}, $C(\pi)\asymp p^{nc}$, thus 
\begin{equation}
\Vol(K_\pi)\asymp C(\pi)^{-\frac{n-1}{2}}.
\end{equation}
The rest are easy to check as $\omega$ is a character on a subgroup.
\end{proof}
\begin{rem}\label{Rem:testfuncFormaldegree}
A nature question to ask is whether the test function $\omega$ is capturing the most information of a matrix coefficient (of any test vector)% of $\varphi_0$, or any test vector in the representation
.
In the case of supercuspidal representations,  the answer is yes as the constant $d_\pi$ is asymptotically the formal degree. 

For parabolically induced representations the formal degree is not well-defined as the matrix coefficients are not square-integrable. While one can make $d_\pi$ arbitrarily large by taking $\omega$ to be the restriction of the matrix coefficient to more double $K-$cosets, the geometric side of pre-trace formula is also becoming larger. So the common sense is to take $\omega$ to be the restriction to a maximal compact open subgroup. Then the questions are what is the maximal $L^2$ norm of a matrix coefficient when restricted to a maximal compact open subgroup $K$ (or equivalently, the
 minimal dimension of a $K-$representation occurring in $\pi$), and whether one's choice of $\omega$ is essentially such a matrix coefficient. To author's knowledge, the first question is still open for general $\GL_n$. But for $\varphi_0$,   our choice of $\omega$ is indeed essentially the matrix coefficient of $\varphi_0$ restricted to $K$. This is because the support of matrix coefficient for $\varphi_0$ is controlled by the intertwining of semisimple types, which in turn is known in \cite{BushnellKutzko:}.
\end{rem}

\begin{lem}[Concentration around torus]\label{Cor:torusMC}
For any $x\in G$ with $\omega(x)\neq 0$, there exists $l\in U_\L(1)$ such that $xl^{-1}\in 1+ \varpi^\cc M_n(O_\F)$.
\end{lem}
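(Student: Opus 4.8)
The plan is to observe first that $\omega$ is supported on $K_\pi$, so the statement is really a statement about elements of $K_\pi$, and that Corollary~\ref{Cor:KpiApproximation} already contains almost everything: it gives $K_\pi\subseteq U_\L(1)\,(1+\varpi^{\cc}M_n(O_\F))$. So for $x$ with $\omega(x)\neq 0$ I would write $x=lu$ with $l\in U_\L(1)$ and $u\in 1+\varpi^{\cc}M_n(O_\F)$; this immediately gives $l^{-1}x=u\in 1+\varpi^{\cc}M_n(O_\F)$.

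The one point that needs attention is that the lemma asks for $xl^{-1}$ rather than $l^{-1}x$. To handle this I would check that $l\in U_\L(1)$ normalises the lattice $M_n(O_\F)$: writing $U_\L(1)=1+\varpi_\L O_\L$ and using that, after the translation placing each $\mathfrak{A}_i$ in standard form, one has $O_\L\subset\mathfrak{A}\subset M_n(O_\F)$, together with $\det l=\Nm_{\L/\F}(l)\in O_\F^\times$, gives $l\in\GL_n(O_\F)$. Since $\varpi^{\cc}$ is central, conjugation by $l$ then preserves $1+\varpi^{\cc}M_n(O_\F)$, so $xl^{-1}=l\,u\,l^{-1}$ lies in it, as desired.

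I expect no real obstacle here. The substantive content is the containment in Corollary~\ref{Cor:KpiApproximation}, which rests on the approximation Corollary~\ref{Cor:herediatoryorderApproximation} and, in the parabolic case, on the explicit shape of $K_\pi$ in \eqref{Eq:ConstructionKpi}; the generic-induction-datum hypothesis enters only through $\cc\asymp c(\pi)/2$ and the fact that it lets one use the single exponent $\cc$ uniformly across all blocks. The only mild points of care are the left/right bookkeeping just described and confirming $U_\L(1)\subseteq\GL_n(O_\F)$ in the chosen coordinates. In the supercuspidal case $K_\pi=B^1\subseteq J^1=U_\L(1)U_\mathfrak{A}(\lfloor\frac{j+1}{2}\rfloor)$, and the claim reduces directly to $U_\mathfrak{A}(\lfloor\frac{j+1}{2}\rfloor)\subseteq 1+\varpi^{\cc}M_n(O_\F)$, which is the $i=\lfloor\frac{j+1}{2}\rfloor$ instance of Corollary~\ref{Cor:herediatoryorderApproximation}.
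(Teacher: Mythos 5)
Your proposal is correct and follows essentially the same route as the paper's proof: decompose $x\in K_\pi$ via Corollary~\ref{Cor:KpiApproximation} as $x=lu$ with $l\in U_\L(1)$, $u\in 1+\varpi^{\cc}M_n(O_\F)$, and use that $l$, lying in the maximal compact subgroup, normalises $1+\varpi^{\cc}M_n(O_\F)$ to pass from $l^{-1}x$ to $xl^{-1}$. Your extra verification that $U_\L(1)\subset\GL_n(O_\F)$ and the explicit supercuspidal reduction just spell out details the paper leaves implicit.
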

\begin{proof}
It follows directly from the definition of $\omega$ and Corollary \ref{Cor:KpiApproximation}. Note that $l\in U_\L(1)\subset K$ normalises $1+ \varpi^\cc M_n(O_\F)$.
\end{proof}

%\begin{proof}
%By definition of $\cc$ and Corollary \ref{Cor:herediatoryorderApproximation}, we have $B_i^1\subset J_i^1\subset O_{\L_i} (1+\varpi^\cc M_{n_i}(O_\F))$. Also by definition we have $\lfloor \frac{c(\pi)+1}{2}\rfloor\geq \cc$. Thus the conclusion follows easily from the definition of $K_\pi$. 
%\end{proof}

\section{Proof of the main result}
\subsection{Amplified pre-trace inequality}
The strategy to obtain a sub-local bound for the sup norm of the automorphic forms on a group $G$ is already somewhat sophisticated. As the focus of this paper is to highlight the choice of the local test vector for the sup norm problem, we shall not seek to improve this strategy, and shall directly take the results on amplifiers from, for example, \cite{BlomerMaga:} and \cite{BlomerMaga:a}.

Recall that $L_0$ is sufficiently large parameter to be determined, basically the length of amplification. $\mathcal{P}$ is the set of primes in $[L_0,2L_0]$ which are coprime to $N$ and to be used for amplifier. $g\in \PGL_n(\A)$ is in a fixed compact domain, in particular we can assume $g=\prod_{v} g_v$ where $g_v$ is in a fixed domain of $\PGL_n(\F_v)$ when $v$ is archimedean, and $g_v\in K_v$ when $v$ is finite. Let $\omega=\prod \omega_v \in C_c(\GL_n(\A))$ be the test function for the pre-trace formula, where 
$\omega_v$ is
\begin{enumerate}
\item fixed test function which is rapidly decaying outside a compact domain when $v=\infty$;
\item Hecke operator as given in \cite{BlomerMaga:a} when $v\in \mathcal{P}$;
\item At $p$ as given in \eqref{Eq:DefnofTestfun};
\item $\Char_{K_v}$ otherwise.
\end{enumerate}

Let $F$ be an automorphic form on $\PGL_n$ whose local component at $p$ is a minimal vector and spherical at all other places. 

Then as in \cite{Marshall:} we have the following amplified pre-trace inequality using Lemma \ref{Cor:formaldegMC}.
\begin{align}\label{Eq:pretraceIneq}
|\pi(\omega)F(g)|^2 \asymp d_\pi^2 |\mathcal{P}|^2|F(g)|^2&\leq \sum\limits_{\gamma\in M_n(\F)}(\omega*\omega^*)(g^{-1}\gamma g)\\
&\ll d_\pi(|\mathcal{P}|+\sum_{i=1}^{n}\sum_{q_1,q_2\in \mathcal{P}} \frac{1}{L_0^{(n-1)i}}\sum\limits_{\gamma\in S(q_1^i q_2^{(n-1)i})}|\omega_\infty\omega_p(g^{-1}\gamma g)|).
\end{align}
In the last inequality we have used  \cite{BlomerMaga:}, and
\begin{equation}
S(m)=\{\gamma\in \GL_n(\Z)| \det \gamma=m\}.
\end{equation}
Note that $\omega_\infty$ is rapidly decaying outside a compact domain and $g_\infty$ is also in a fixed compact domain. Using the Cartan decomposition for $\GL_n(\R)=Z KAK$, where $K=SO(n)$ and $A$ is the diagonal torus with $\det=1$, 
we can assume that $\gamma$ contributing to \eqref{Eq:pretraceIneq} has the property that when we write $\gamma=zk_1 a k_2$, $a\in A_0$ for some fixed compact domain $A_0$ of $A$.  Define for $a=\diag(a_1, a_2, \cdots, a_n)\in A$, $|a|=\max\{|a_i|\}$. Then $|A_0|:=\max\{|a|, a\in A_0\}$ is finite and fixed.
Also by Lemma \ref{Cor:torusMC}, $\omega_p$ is $p-$adically concentrated on some torus $U_\L(1)\subset K_p$, and after conjugation by $g_p\in K_p$, $\gamma_p$ should still be $p-$adically close to some general torus  $T\subset K_p$. Note that we shall not use any information on the exact embedding of $T$, but purely the fact $T$ is a torus. Then those $\gamma$'s contributing to \eqref{Eq:pretraceIneq} should lie in the following set.
\begin{equation}
S(m, T, \cc)=\{\gamma\in \GL_n(\Z)| \det \gamma=m, \gamma_\infty \in Z KA_0K, \gamma_p \equiv t\in T\mod{p^{\cc}}\}.
\end{equation}
Then
\begin{equation}\label{Eq:pretraceIneq2}
d_\pi |\mathcal{P}|^2|F(g)|^2\ll |\mathcal{P}|+\sum_{i=1}^{n}\sum_{q_1,q_2\in \mathcal{P}} \frac{1}{L_0^{(n-1)i}}|S(q_1^i q_2^{(n-1)i},T,\cc)|
\end{equation}
\begin{rem}
In \cite{BlomerMaga:} there are finer conditions for $\gamma \in S(m)$ concerning its 1-st and 2-nd determinantal divisor. But we don't need these additional conditions for our purpose.
\end{rem}

\subsection{Estimate for Hecke return}
We shall give a bound for $|S(m,T,\cc)|$ in this subsection by utilising the fact $\gamma\in S(m,T,\cc)$ is $p-$adically close to some torus $T$.
\begin{lem}\label{Lem:abelianstructure}
When $p^{\cc}\gg n m^2 |A_0|^4 $,  $S(m, T, \cc)$ is abelian.
\end{lem}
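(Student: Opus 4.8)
The idea is to show that any two elements $\gamma_1,\gamma_2\in S(m,T,\cc)$ commute, by exploiting that both are $p$-adically close (mod $p^\cc$) to elements of the same torus $T$, while being archimedeanly bounded. First I would record the consequence of the hypotheses on a single $\gamma\in S(m,T,\cc)$: since $\gamma$ has integer entries, $\det\gamma = m$, and $\gamma_\infty \in ZKA_0K$, each entry of $\gamma$ is bounded by a constant depending only on $n$, $m$ and $|A_0|$ — more precisely $|\gamma_{ij}| \le C_1(n) m |A_0|$ or so, because $\gamma$ is the product of an orthogonal matrix, a diagonal matrix with entries bounded by $|A_0|$ (and, via $\det = m$, bounded below too), and another orthogonal matrix, and the entries are integers. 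Consequently, for $\gamma_1,\gamma_2 \in S(m,T,\cc)$ the commutator-type quantity $\gamma_1\gamma_2 - \gamma_2\gamma_1$ is an integer matrix whose entries are bounded by roughly $2n\,(C_1(n) m |A_0|)^2 \le C_2(n)\,m^2|A_0|^4$.

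Next, I would use the $p$-adic condition. Write $\gamma_1 \equiv t_1$, $\gamma_2 \equiv t_2 \pmod{p^\cc}$ with $t_1,t_2 \in T$. Since $T$ is abelian, $t_1 t_2 = t_2 t_1$, hence
\begin{equation}
\gamma_1\gamma_2 - \gamma_2\gamma_1 \equiv t_1 t_2 - t_2 t_1 = 0 \pmod{p^\cc},
\end{equation}
so every entry of $\gamma_1\gamma_2-\gamma_2\gamma_1$ is divisible by $p^\cc$. Combining this with the archimedean bound: if $p^\cc > C_2(n)\, m^2|A_0|^4$ (which is guaranteed by the hypothesis $p^\cc \gg n m^2|A_0|^4$ once the implied constant absorbs $C_2(n)/n$), then an integer bounded in absolute value by something strictly smaller than $p^\cc$ and divisible by $p^\cc$ must be $0$. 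Therefore $\gamma_1\gamma_2 = \gamma_2\gamma_1$, i.e. $S(m,T,\cc)$ is a commuting set. Since it is also closed under nothing in particular, to conclude it is "abelian" as a set one just notes all its elements pairwise commute (the statement is about the set, not a subgroup — or one passes to the group it generates, which is then abelian).

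The only genuinely delicate point is pinning down the entrywise archimedean bound on $\gamma \in S(m,T,\cc)$ and hence the explicit constant in front of $m^2|A_0|^4$, so that it matches the stated hypothesis $p^\cc \gg n m^2|A_0|^4$. This is a routine but slightly fiddly estimate: from $\gamma_\infty = z k_1 a k_2$ with $k_1,k_2 \in SO(n)$ and $a = \diag(a_1,\dots,a_n)$, $\max|a_i| \le |A_0|$, one gets $\|\gamma_\infty\|_{op} \le |z|\,|A_0|$, and the scalar $|z|$ is controlled because $|\det \gamma| = m = |z|^n \prod|a_i|$ together with $\gamma$ being integral forces $|z|$ into a bounded range (a nonzero integer matrix has operator norm $\ge$ a constant, and $\det = m$ bounds things the other way); one should be a little careful that $a \in A$ has $\det a = 1$ in the normalization used, so $|z|^n = m$ and $|z| = m^{1/n}$, giving $|\gamma_{ij}| \le m^{1/n}|A_0| \le m|A_0|$, whence the entries of the commutator are $\le 2n\, m^2|A_0|^2 \le 2n\, m^2|A_0|^4$. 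I expect this bookkeeping — rather than any conceptual difficulty — to be the main thing to get right; the core argument (abelian $T$ kills the commutator mod $p^\cc$, archimedean boundedness kills it over $\Z$) is immediate.
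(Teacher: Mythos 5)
Your proof is correct and uses the same core idea as the paper: the commutator is forced to be trivial because it is both archimedeanly small and divisible by $p^\cc$. The paper implements this with the multiplicative commutator $\beta=\gamma_1^{-1}\gamma_2^{-1}\gamma_1\gamma_2$, which lives in $\GL_n(\Z[\frac{1}{m^2}])$, has $\det\beta=1$, lies in $ZKA_0^4K$ at infinity, and is $\equiv 1 \bmod p^\cc$; one then writes $\beta = 1 + \frac{p^\cc}{m^2}U$ with $U$ integral and uses the $n|A_0|^4$ entrywise bound on $KA_0^4K$ to force $U=0$. You instead use the additive commutator $\gamma_1\gamma_2-\gamma_2\gamma_1$, which is already in $M_n(\Z)$ (no denominators to clear), has entries bounded by $2n\,m^{2/n}|A_0|^2$ via the Cartan decomposition with $|z|=m^{1/n}$, and is $\equiv 0\bmod p^\cc$ since $T$ is abelian. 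Both arguments land under the stated hypothesis $p^\cc\gg n\,m^2|A_0|^4$; your version is marginally cleaner in avoiding $\GL_n(\Z[\frac{1}{m^2}])$ and it actually needs only $p^\cc\gg n\,m^{2/n}|A_0|^2$, a slightly weaker condition, whereas the paper's multiplicative commutator needs the $m^2$ in the denominator to be cleared, which is exactly where its $m^2$ in the hypothesis comes from. The one place you hedged — pinning down $|z|$ — is easily resolved: with $a\in A$ of determinant $1$ and $k_1,k_2\in SO(n)$, $\det\gamma=m$ forces $z^n=m$, so $|z|=m^{1/n}$, and the entrywise bound $m^{1/n}|A_0|$ follows.
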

\begin{proof}
For any $\gamma_1,\gamma_2\in S(m, T, \cc)$, let $\beta=[\gamma_1,\gamma_2]=\gamma_1^{-1}\gamma_2^{-1}\gamma_1\gamma_2$. Then 
\begin{equation}\label{eq1:det}
\det(\beta)=1
\end{equation}
\begin{equation}\label{eq2:integralwithdenom}
 \beta\in \GL_n(\Z[\frac{1}{m^2}])
\end{equation}
\begin{equation}\label{eq3:infty}
 \beta_\infty\in ZKA_0^4K
\end{equation}
\begin{equation}\label{eq4:atp}
 \beta_p\equiv 1\mod{p^{\cc}}.
\end{equation}
Here $A_0^4$ is the smallest compact region containing $a^4$ for all $a\in A_0$ (can probably be made more precise). So by definition $|A_0^4|=|A_0|^4$.

We shall show that $\beta=1$ for $p^{\cc}$ large enough. From \eqref{eq2:integralwithdenom} and \eqref{eq4:atp}, we can write
\begin{equation}
\beta=\left(\begin{array}{cccc}
1+\frac{p^{\cc}}{m^2}u_{1,1}&\frac{p^{\cc}}{m^2}u_{1,2} &\cdots & \frac{p^{\cc}}{m^2}u_{1,n}\\
\frac{p^{\cc}}{m^2}u_{2,1} &1+\frac{p^{\cc}}{m^2}u_{2,2} &\cdots &\frac{p^{\cc}}{m^2}u_{2,n}\\
\vdots &\vdots &\ddots &\vdots \\
\frac{p^{\cc}}{m^2}u_{n,1} &\frac{p^{\cc}}{m^2}u_{n,2}&\cdots &1+\frac{p^{\cc}}{m^2}u_{n,n}
\end{array}\right)
\end{equation}
where $u_{i,j}\in \Z$. Note that $\det \beta=1$. So when we do Cartan decomposition for $\beta$, we can take $z=1$. On the other hand, every matrix in $KA_0^4K$ has  entries bounded by $n|A_0|^4$. %For example let $X$ be the maximum of the entries in $A_0$. Then the entries in $KA_0^4K$ is bounded by $nX^4$. ($X^4$ comes from $A_0^4$, multiplying by $K$ on one side gives smaller maximum, and multiplying by $K$ on the other side collect at most $n$ of large values.)\yh{This is probably not ideal. Do you have better thoughts?}
Thus when $\frac{p^{\cc}}{m^2}\gg n|A_0|^4$, we must have $u_{i,j}=0$ for $\beta$ and $\beta=1$.
\end{proof}

%Assume from now on $m=l^{|j|}$ for a prime $l\neq p$ and multi-index $j$ and $|j|=\sum j_i$, and $T_{l^j}$  to be the associated Hecke operator at $l$. For simplicity we assume that $T_{l^j}$ is constructed from the characteristic function of $K_lA_iK_l$ where $A_i$ is diagonal with entries $l^{j_i}$. \yh{I'm not sure about the standard notation for these.}

By the previous lemma we get that $S(m, T, \cc)=S(m, T, \cc)\cap \T$ for some global etale algebra $\T$ (possibly different from $T$).  $\T$ is of degree at most $n$, as it's abelian and thus diagonalisable over algebraically closed field, while the diagonal torus of $\GL_n$ is at most $n$ dimensional. 

When localised at $l$,  $\T_l=\bigotimes\limits_{i=1}^{k} T_l^i$ where $T_l^i$ is a field extension over $\F_l$ of degree $n_i$ and $\sum\limits_{i=1}^{k} n_i\leq n$. In particular $k\leq n$. Then for $e\in \T_l^*, e=\otimes e_{i}$ for $e_i\in T_l^i$,
\begin{equation}
 \det e=\prod \Nm_{T_l^i/\F_l}(e_i).
\end{equation} 
To see this one can assume that after proper conjugation, $\T_l$ can be embedded into $\GL_n$ in such a way that $T_l^i\hookrightarrow \GL_{n_i}$ and $\otimes\GL_{n_i}$ is embedded into $\GL_n$ blockwise diagonally.
From this, one can see that  
\begin{equation}
 \det e=\prod \det e_i=\prod \Nm_{T_l^i/\F_l}(e_i).
\end{equation} 
%The second equality is because both are related to the last coefficient of the associated minimal polynomial.

Suppose now that $$m=\prod\limits_{j}l_j^{a_j}$$ over a finite subset of primes $\{l_j\}\subset\mathcal{P}$, $\T_{l_j}=\bigotimes\limits_{i=1}^{k_j}T_{l_j}^i$.
For each field $T_{l_j}^i$, let $\mathcal{Q}_{l_j}^i$ be the set of ideals in $T_{l_j}^i$.
\begin{defn}
Let $\tau$ be the following map. %projection map onto $l_j$ components with respect to the decomposition of $T_{l_j}$
\begin{align}
\tau: S(m, T, \cc)\cap \T &\rightarrow \otimes_{j} \otimes_{i} \mathcal{Q}_{l_j}^i\\
\gamma &\mapsto \otimes_i (\gamma_{l_j}^i),\notag
\end{align}
where $\gamma_{l_j}=\otimes \gamma_{l_j}^i$ for $\gamma_{l_j}^i\in T_{l_j}^i$ and $(\gamma_{l_j}^i)$ is the principal ideal in $T_{l_j}^i$ generated by $\gamma_{l_j}^i$.

\end{defn}

\begin{lem}\label{Lem:fiberoflocalisation}
The fiber of $\tau$ is finite and absolutely bounded in terms of $n$ and $|A_0|$.
\end{lem}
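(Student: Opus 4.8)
The plan is to bound the fiber of $\tau$ by showing it is contained in a set of units with controlled size, and then invoke a Dirichlet-type finiteness together with the archimedean constraint \eqref{eq3:infty}. Suppose $\gamma_1,\gamma_2 \in S(m,T,\cc)\cap \T$ lie in the same fiber, so $\tau(\gamma_1) = \tau(\gamma_2)$. Then for every $j$ and $i$, the elements $\gamma_{1,l_j}^i$ and $\gamma_{2,l_j}^i$ generate the same principal ideal in $T_{l_j}^i$, hence $\gamma_1\gamma_2^{-1}$, viewed in $\T_{l_j} = \bigotimes_i T_{l_j}^i$, is a unit at each $l_j$. Away from the primes $l_j$ dividing $m$, both $\gamma_1$ and $\gamma_2$ already lie in $\GL_n(\Z_{l})$ (they are in $\GL_n(\Z[\tfrac{1}{m}])$ with determinant $m$, so their entries and the entries of their inverses are $l$-integral for $l\nmid m$), so $\gamma_1\gamma_2^{-1}$ is an $l$-unit there too. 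Therefore $\epsilon := \gamma_1\gamma_2^{-1}$ is a global unit in the order generated by $\T$ inside $M_n(\Z[\tfrac1m])$ — more precisely, a unit at every finite place — and $\det \epsilon = 1$.

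Next I would use the archimedean condition. Since $\gamma_1,\gamma_2 \in ZKA_0K$ with $|A_0|$ fixed, we have $(\gamma_1\gamma_2^{-1})_\infty \in Z K A_0^2 K$ (as in the proof of Lemma \ref{Lem:abelianstructure}), so all archimedean absolute values of $\epsilon$ and of $\epsilon^{-1}$ are bounded by a constant depending only on $n$ and $|A_0|$. Combined with the fact that $\epsilon$ is a unit at all finite places of the (at most $n$-dimensional) étale algebra $\T$, the product formula forces each archimedean component of $\epsilon$ to have absolute value bounded above and below by constants depending only on $n$ and $|A_0|$. Thus $\epsilon$ ranges over a subset of the unit group of $\T$ whose archimedean image lies in a fixed compact set. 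Since units form a discrete subgroup of $\prod_{v\mid\infty}\T_v^*$ (Dirichlet's unit theorem applied to $\T \cong \prod$ number fields, after base change to $\Q$), the number of such $\epsilon$ is finite and bounded in terms of $n$ and $|A_0|$ only. Since $\gamma_2$ is determined by $\gamma_1$ and $\epsilon$, the fiber has cardinality bounded by this quantity.

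The one subtlety I would be careful about is that $\T$ is only known to be an abelian subalgebra of $M_n(\Q)$, not a maximal order or even an order in an obvious way, and the relevant ``units'' live in the stabilizer $\T \cap \GL_n(\Z[\tfrac1m])$ rather than in the full ring of integers of a number field. The clean way around this is to observe that $\T\cap \GL_n(\Z[\tfrac1m])$ injects into $\mathcal{O}_{\T}[\tfrac1m]^*$ where $\mathcal{O}_{\T}$ is the maximal order of the étale $\Q$-algebra $\T\otimes_{\Z}\Q$ (of rank $\le n$), and that the constraint $\tau(\gamma_1)=\tau(\gamma_2)$ kills the $\tfrac1m$-part, landing $\epsilon$ in $\mathcal{O}_{\T}^*$ with determinant (i.e. norm) $1$; then the compact-archimedean-image argument applies verbatim via the logarithmic embedding. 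The main obstacle is thus purely bookkeeping: making sure the passage from the concrete lattice $\T\cap \GL_n(\Z)$ to a genuine unit group of a number ring is carried out without introducing dependence on $m$ or on $\T$ itself in the final bound — the archimedean size constraint from $A_0$ is exactly what decouples the count from these.
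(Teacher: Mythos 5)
Your setup matches the paper's: you pass to $\epsilon=\gamma_1\gamma_2^{-1}$ (the paper's $\beta'=\gamma_1^{-1}\gamma_2$, the same element since $\T$ is abelian), and you correctly establish exactly the four facts the paper uses — $\det\epsilon=1$, integrality at all finite places away from the $l_j$, unit-ness in each factor $O_{T_{l_j}^i}$ at the $l_j$ because the ideals $\tau(\gamma_1)$ and $\tau(\gamma_2)$ coincide, and the archimedean confinement $\epsilon_\infty\in ZKA_0^2K$. The divergence, and the gap, is in how you conclude. You invoke discreteness of the unit group $\mathcal{O}_\T^*$ in $\prod_{v\mid\infty}\T_v^*$ (Dirichlet) intersected with a fixed compact set. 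Discreteness gives finiteness \emph{for each fixed} $\T$, but it does not give a bound depending only on $n$ and $|A_0|$: the separation of the unit lattice (equivalently, the relevant covolume data) a priori depends on the étale algebra $\T$, and $\T$ here varies with the amplifier integer $m$, the point $g$, and the particular elements $\gamma$ — so "discrete $\cap$ compact is finite" does not yield the \emph{absolute} bound that the lemma asserts and that the amplification argument in \eqref{Eq:pretraceIneq2} actually needs. Your closing paragraph acknowledges that dependence on $\T$ must be avoided, but the mechanism you offer (compact archimedean image plus discreteness) does not by itself decouple the count from $\T$.

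The uniform conclusion requires one more arithmetic input, and it is exactly what the paper uses: since $\epsilon$ is integral at every finite place of $\T$ and its archimedean images (the eigenvalues of the matrix $\epsilon$, bounded by the $ZKA_0^2K$ condition together with $\det\epsilon=1$) are bounded in terms of $n$ and $|A_0|$, its characteristic polynomial has rational integer coefficients bounded polynomially in $|A_0|$ in terms of $n$; there are only $O_{n,|A_0|}(1)$ such polynomials, and each has at most $n$ roots in each of the at most $n$ field factors of $\T$, giving an absolute bound on the number of possible $\epsilon$ (a Northcott-type count). Equivalently, you could repair your lattice-point argument by noting that for two \emph{distinct} such units the norm of their difference is a nonzero rational integer, hence $\geq 1$, which forces a separation at the archimedean places depending only on $n$ and $|A_0|$ — but either repair is in substance the characteristic-polynomial argument, not raw discreteness. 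So your route is salvageable, but as written the key uniformity step is missing, and once supplied it coincides with the paper's proof.
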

\begin{proof}
Let $\gamma_1,\gamma_2\in S(m, T, \cc)\cap \T$  with $\tau(\gamma_1)=\tau(\gamma_2)$. Then for $\beta'=\gamma_1^{-1}\gamma_2$, we have %by $\gamma_i\in S(m, T, \cc)$
\begin{equation}\label{eq:beta'1}
\det(\beta')=1,
\end{equation}
\begin{equation}\label{eq:beta'2}
 \beta'_\infty\in ZKA_0^2K,
\end{equation}
\begin{equation}\label{eq:beta'3}
 \beta'\in \GL_n(O_v)
\end{equation}

for any finite place $v\neq l_j$. At $l_j$, we have that 
\begin{equation}\label{eq:beta'4}
 \beta'_l=\otimes_i \beta_{l_j}^{'i}\text{\ for }\beta_{l_j}^{'i}\in O_{T_{l_j}^i}^*
\end{equation} 
as  $\tau(\gamma_1)=\tau(\gamma_2)$. Let $f(x)$ be the characteristic polynomial of $\beta'$. 
From \eqref{eq:beta'3} and \eqref{eq:beta'4}, all the coefficients of $f(x)$ has to be integral as the localisation of $\beta'$ is either integral in $\GL_n(\F_v)$ or in $\otimes_i T_{l_j}^i$. 
\eqref{eq:beta'1} and \eqref{eq:beta'2} implies that all the coefficients of $f(x)$ are bounded by a polynomial of $|A_0|$ in terms of $n$. The possible solutions inside an etale algebra to all such characteristic polynomials can be absolutely bounded in terms of $n$ and $|A_0|$. %$\T$ is an etale algebra but the solutions are still bounded by $n$ for each minimal polynomial.
\end{proof}
\begin{rem}
One can try to bound the fiber more effectively, which might be useful for a hybrid bound.
\end{rem}

\begin{lem}\label{Lem:imageoflocalisation}
 $$\# \Image(\tau) \leq \prod_j P(a_j, n), \text{\ and accordingly } |S(m, T, \cc)|\ll_{n,|A_0|}\prod_j P(a_j, n).$$
Here $P(a,n)=C^{n-1}_{n+a-1}$ is the number of partitions of $a_j$ into $n$ non-negative ordered integers. 

In particular if $\sharp j\ll_n 1$, $a_j\ll_n 1$, then $|S(m, T, \cc)|\ll_{n,|A_0|} 1$.
\end{lem}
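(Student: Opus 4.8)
The plan is to analyse the image of $\tau$ prime-by-prime and apply the determinant constraint $\det\gamma=m$ to bound the possible ideal-data at each $l_j$. Fix a prime $l=l_j$ with $l^{a_j}\|m$. For $\gamma\in S(m,T,\cc)\cap\T$ we have $\gamma_l=\otimes_{i=1}^{k_j}\gamma_l^i$ with $\gamma_l^i\in (T_l^i)^*$, and $\tau(\gamma)$ records the principal ideals $(\gamma_l^i)\subset T_l^i$. Since each $T_l^i/\F_l$ is a (possibly ramified) field extension of degree $n_i$ with $\sum_i n_i\le n$, the ideal $(\gamma_l^i)$ is determined by a single non-negative integer, namely the valuation $b_i:=v_{T_l^i}(\gamma_l^i)\ge 0$ (non-negativity because $\gamma\in\GL_n(\Z[\tfrac1{m^2}])$ and $\gamma_l$ is $l$-integral, being $\equiv t\bmod p^{\cc}$ at a place different from $l$ — more simply, $\gamma$ has $O_v$-integral entries at all finite $v\mid l$). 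Then
\begin{equation}
v_l(\det\gamma_l)=\sum_{i=1}^{k_j} v_l\big(\Nm_{T_l^i/\F_l}(\gamma_l^i)\big)=\sum_{i=1}^{k_j} e_i f_i\cdot\frac{b_i}{e_i}\cdot\frac{1}{\,?\,},
\end{equation}
so I would instead phrase it cleanly: $v_l(\Nm_{T_l^i/\F_l}(\gamma_l^i))=f_i\, b_i$ where $e_if_i=n_i$ and $b_i=v_{T_l^i}(\gamma_l^i)$, whence $v_l(\det\gamma)=\sum_i f_i b_i$. Since $\det\gamma=m$, this sum equals $a_j$. Grouping the degrees, $\sum_i f_i b_i = a_j$ with each $b_i\ge 0$ and $\sum_i n_i = \sum_i e_i f_i \le n$, so the number of admissible tuples $(b_1,\dots,b_{k_j})$ is at most the number of ways to write $a_j$ as an ordered sum of at most $n$ non-negative integers with non-negative coefficients, which is bounded by $P(a_j,n)=\binom{n+a_j-1}{n-1}$ (the crude bound drops the weights $f_i$ and the constraint $k_j\le n$, only increasing the count). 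Multiplying over the finitely many $j$ gives $\#\Image(\tau)\le\prod_j P(a_j,n)$.

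Combining with Lemma \ref{Lem:fiberoflocalisation}, which says the fibers of $\tau$ are bounded purely in terms of $n$ and $|A_0|$, yields $|S(m,T,\cc)|\le \#\Image(\tau)\cdot(\text{fiber bound})\ll_{n,|A_0|}\prod_j P(a_j,n)$. Finally, if $\sharp\{j\}\ll_n 1$ and each $a_j\ll_n 1$, then each $P(a_j,n)=\binom{n+a_j-1}{n-1}\ll_n 1$ and the product of boundedly many such factors is $\ll_n 1$, giving $|S(m,T,\cc)|\ll_{n,|A_0|}1$.

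The only genuinely delicate point is the bookkeeping identifying $v_l(\det\gamma)$ with $\sum_i f_ib_i$ and then checking that this linear Diophantine equation, together with $\sum_i n_i\le n$, has at most $P(a_j,n)$ solutions; this requires being careful that the decomposition $\T_l=\otimes_i T_l^i$ and the norm formula $\det e=\prod_i\Nm_{T_l^i/\F_l}(e_i)$ established above are applied with the correct $e_i,f_i$, and that non-negativity of the $b_i$ is justified. Everything else is either immediate from the displayed norm formula or already furnished by Lemma \ref{Lem:fiberoflocalisation} and the combinatorial definition of $P(a,n)$.
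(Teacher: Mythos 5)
Your proof is correct and takes essentially the same route as the paper: at each prime $l_j$ you use that ideals in the local fields $T_{l_j}^i$ are principal and determined by a single non-negative valuation, then you impose the determinant constraint $\det\gamma=m$ to get a linear Diophantine equation whose solution count is bounded by $P(a_j,n)$. The paper phrases the middle step slightly differently --- it observes that there is \emph{at most one} ideal in $\mathcal{Q}_{l_j}^i$ with a given norm (each norm being a power of $l_j$), so the image at $l_j$ injects into the factorisations of $l_j^{a_j}$ into such norm-powers --- whereas you make the valuation/residue-degree bookkeeping $v_l(\Nm_{T_l^i/\F_l}(\gamma_l^i))=f_i b_i$ and $\sum_i f_i b_i=a_j$ fully explicit; these are the same observation in different notation. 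One small blemish: your first display equation with the stray $\tfrac{1}{?}$ is broken and immediately abandoned; you should simply state the clean version $v_l(\det\gamma)=\sum_i f_i b_i$ with $e_i f_i=n_i$ and drop the false start. You also flag the need to justify $b_i\ge 0$; the paper is equally terse on this point (it works with the set $\mathcal{Q}_{l_j}^i$ of ideals, implicitly integral ones), and the justification via $\gamma\in\GL_n(\Z)$ being $l$-integral is the right one, so no real gap.
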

\begin{proof}
First of all by $\det(\gamma)=m=\prod_j l_j^{a_j}$ and $\det(\gamma_{l_j})=\prod \Nm(\gamma_{l_j}^i)$, we have that $\prod \Nm ((\gamma_{l,i}))=l_j^{a_j}$.
As the ring of integers $O_{T_{l_j}^i}$ is a discrete valuation ring, it's automatically a PID, so any ideal in $\mathcal{Q}_{l_j}^i$ is of form $(\varpi_{T_{l_j}^i}^n u)=(\varpi_{T_{l_j}^i}^n)$ where $\varpi_{T_{l_j}^i}$ is a local uniformizer and $u$ is any unit in the local field. Its norm only depends on $n$, so there is at most one ideal in $\mathcal{Q}_{l_j}^i$ with given norm. Each such norm is a power of $l_j$.

Thus the local components of elements in $\Image(\tau)$ at $l_j$ are in injection to ways of writing $l_j^{a_j}$ as product of norms of ideals from $T_{l_j}^i$. Thus
\begin{equation}
 \# \Image(\tau) \leq \prod_j P(a_j, k_j)\leq \prod_j P(a_j,n).
\end{equation}
When $\sharp j\ll_n 1$, $a_j\ll_n 1$, $P(a_j,n)=C^{n-1}_{n+a_j-1}\ll_n 1$ and $$ |S(m, T, \cc)|\ll_{n,|A_0|}\prod_j P(a_j, n)\ll_n 1.$$
\end{proof}

\subsection{proof of the main result}
To apply Lemma \ref{Lem:abelianstructure}, we pick $L_0$ such that
\begin{equation}
 L_0^{2n^2}\asymp_{A_0,n} p^{\cc}, \text{\ i.e., } L_0\asymp_{A_0,n} p^{\frac{\cc}{2n^2}}
\end{equation}
Then by Lemma \ref{Lem:fiberoflocalisation}, \ref{Lem:imageoflocalisation}, \eqref{Eq:pretraceIneq2} now reads
\begin{align}
 d_\pi |\mathcal{P}|^2|F(g)|^2&\ll |\mathcal{P}|+\sum_{i=1}^{n}\sum_{q_1,q_2\in \mathcal{P}} \frac{1}{L_0^{(n-1)i}}|S(q_1^i q_2^{(n-1)i},T,\cc)|\\
 &\ll_{|A_0|,n} |\mathcal{P}|+\sum_{i=1}^{n}\sum_{q_1,q_2\in \mathcal{P}}  \frac{1}{L_0^{(n-1)i}}\notag\\
 &\ll_{|A_0|,n}  L_0.  \notag
\end{align}
Then by Lemma \ref{Lem:Conductorofpi}, \ref{Cor:formaldegMC} and \eqref{Eq:asypmCC}
\begin{equation}
|F(g)|\ll_{|A_0|,n} \frac{1}{(d_\pi L_0)^{1/2}}\asymp_{|A_0|,n,p} p^{\frac{c(n^2-n)}{4}-\frac{\cc}{4n^2}}\asymp C(\pi)^{\frac{n-1}{4}-\frac{1}{8n^3}}.
\end{equation}

\end{document}